\newcommand\nn{\mathbb N}
\newcommand\zz{\mathbb Z}
\newcommand\rr{\mathbb R}
\newcommand\qq{\mathbb Q}
\newcommand\cc{\mathbb C}
\newcommand\oo{\mathcal O}
\def\epsilon{\varepsilon}
\def\phi{\varphi}
\newcommand{\ds}{\mathbb D}
\newcommand{\SH}{\operatorname{SH}}
\newcommand{\PSH}{\operatorname{PSH}}
\newcommand{\inter}{\operatorname{int}}
\theoremstyle{plain}
\newtheorem{theorem}{Theorem}
\newtheorem{proposition}[theorem]{Proposition}
\newtheorem{lemma}[theorem]{Lemma}
\newtheorem{corollary}[theorem]{Corollary}
\theoremstyle{definition}
\theoremstyle{remark}
\newtheorem{remark}[theorem]{Remark}
\numberwithin{equation}{section}
\numberwithin{theorem}{section}
\begin{document}

\title[Dimension of the Bergman space of Hartogs domains]%
{A remark on the dimension of the Bergman space of some Hartogs domains}
\author{Piotr Jucha}
\address{Institute of Mathematics, Jagiellonian University, \L{}ojasiewicza~6,
30--348 Krak\'ow, Poland}
\email{Piotr.Jucha@im.uj.edu.pl}
\thanks{The work is supported by the research grant No.~N~N201~361436 of the Polish Ministry 
of Science and Higher Education.}
\keywords{Bergman spaces, Hartogs domains, subharmonic functions}
\subjclass[2000]{32A36, 32A07, 31A05}

\begin{abstract}
Let $D$ be a Hartogs domain of the form
$D=\{(z,w)\in \mathbb C \times \mathbb C^N : \|w\| < e^{-u(z)} \}$ where $u$ is a
subharmonic function on $\mathbb C$. 
We prove that the Bergman space $L^2_h(D)$ of holomorphic and square integrable functions on $D$
is either trivial or infinite dimensional.
\end{abstract}

\date{\today}
\maketitle

\section{introduction}

Let $L^2_h(\Omega)$ denote the Bergman space of a domain $\Omega\subset \cc^N$,
i.e.\ the space of square integrable and holomorphic functions on $\Omega$.

We are interested in the following open question (see e.g. \cite{bib:jar-pfl}, \cite{bib:pfl-zwo}):
\textit{is there a pseudoconvex domain with finite dimensional and nontrivial
Bergman space?}

J.~Wiegerinck (cf.~\cite{bib:wie}) gave examples of Reinhardt domains
in $\cc^2$ such that their Bergman spaces were finite dimensional but nontrivial.
Those domains, however, are not pseudoconvex.
What is more, there exists a simple geometric characterization of pseudoconvex Reinhardt domains:
if a logarithmic image of such a domain contains a real affine line then its Bergman space is $\{0\}$,
otherwise it is infinite dimensional (cf.~\cite{bib:zwo1, bib:zwo2}).

It is known that a Bergman space for any subdomain of $\cc$ is also either
infinite dimensional or trivial (cf.~\cite{bib:skw}, \cite{bib:wie}).

We consider Hartogs domains $D_\phi$ of the form
\[
D_\phi = D_\phi(G) = \{(z,w)\in G\times\cc^N:\, \|w\| < e^{-\phi(z)}\} \subset \cc^M\times\cc^N,
\]
where $G$ is a domain in $\cc^M$, $\phi\in\PSH(G)$ and
$\|\cdot\|$ denotes the maximum norm.
We use the maximum norm for convenience but the same 
results hold for any $\cc$--norm.
(If $\widetilde D_\phi$ is such a domain defined for other $\cc$--norm then
one just needs to take $D_{\phi+c_1} \subset \widetilde D_{\phi} \subset D_{\phi+c_2}$
for suitable constants $c_1,c_2$.)

 We believe that the answer for this question is negative, at least for Hartogs domains. 
 Even though in this paper we are dealing with domains with one dimensional basis, 
 we think that the main idea (cf.~Proposition~\ref{prop:sko_one}) and some techniques of the proofs 
 could also be used at least in some multi-dimensional cases with the help of advanced pluripotential theory.

The main result of the paper is Theorem~\ref{thm:main}, which states that $L_h^2(D_\phi(\cc))$
is either trivial or infinite dimensional.
More precisely, necessary and sufficient conditions on $\phi$ are given for 
$\dim L^2_h(D_\phi(\cc)) = \infty$ and for $\dim L^2_h(D_\phi(\cc)) = 0$.

The method used provided supplementary results:
$\dim L^2_h(D_\phi(G)) = \infty$ if $G \subset \cc^M$ is bounded (Corollary~\ref{prop:bdd})
or $G\subset \cc$ has nonpolar complement (cf.~Corollary~\ref{cor:nonpolar}).

In accordance with a basic result for 
%$L_h^2$--functions in 
Hartogs domains (Lemma~\ref{lem:hartogs}),
to determine the dimension of the Bergman space
we may consider only the square integrable holomorphic functions on $D_\phi$
of the form $f(z)w^n$ where $f\in\oo(G)$.
Therefore, our strategy in the sequel is to find infinitely many such functions 
(for different $n\in\zz_+^N$)
or, respectively, prove that none of them exists except the zero function.

\begin{lemma}\label{lem:hartogs}
Let $D\subset G\times \cc^{N}$ be a Hartogs domain over $G\subset \cc^M$ with complete 
$N$--circled fibers.
\begin{enumerate}
\item (cf.~\cite{bib:jak-jar})
If $f\in\oo(D)$, then there exist $f_n\in\oo(\cc)$, $n\in\zz_+^N$, such that
\begin{equation}
f(z,w) = \sum_{n\in\zz_+^N} f_n(z)w^n, \qquad (z,w)\in D,
\label{eq:hartogs}
\end{equation}
and the series is locally uniformly convergent.
\item\label{en:hartogs2}
If $f\in L^2_h(D)$, then $f_n(z)w^n \in L^2_h(D)$ for $n\in\zz_+^N$ and
the series {\rm (\ref{eq:hartogs})} is convergent in $L^2_h(D)$.
\end{enumerate}
\end{lemma}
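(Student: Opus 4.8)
The plan is to prove both parts by reducing everything to the well-understood case of holomorphic functions on $G$ with weighted $L^2$ norms. First I would compute the $L^2(D)$ norm of a monomial-type function $f(z)w^n$ by integrating out the $w$ variable over the polydisc $\{\|w\| < e^{-\phi(z)}\}$: using polar coordinates in each $w_j$ and the maximum norm, one gets
\[
\|f(z)w^n\|_{L^2(D)}^2 = \int_G |f(z)|^2 \, c_n \, e^{-2(|n|+N)\phi(z)}\, d\lambda_{2M}(z),
\]
for an explicit positive constant $c_n$ depending only on $n$ and $N$; here I must be a little careful that $e^{-\phi}$ may vanish (where $\phi\equiv+\infty$) or that the fiber may be empty, but since $\phi\in\PSH(G)$ it is locally integrable and these issues are handled by the convention $0\cdot\infty=0$ together with Fubini. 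This already gives part (b) for a single summand once I know $f_n\in\oo(G)$, which is part (a).

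For part (a), I would start from the locally uniform convergence guaranteed by the standard Hartogs-series expansion. Fix $(z_0,w_0)\in D$ with all coordinates of $w_0$ nonzero; then on a small polydisc around $(z_0,w_0)$ contained in $D$ the series $f(z,w)=\sum_n f_n(z)w^n$ converges locally uniformly, and by the usual Cauchy-integral extraction of Taylor coefficients in $w$ (integrating $f(z,w)w^{-n-\mathbf 1}$ over a torus in $w$ of radius slightly less than $e^{-\phi(z)}$) each $f_n$ is holomorphic on $G$; the quoted reference \cite{bib:jak-jar} supplies exactly this. So the only real work is part (b): upgrading locally uniform convergence of \eqref{eq:hartogs} to convergence in the Hilbert space $L^2_h(D)$, and showing each partial sum — indeed each term — lies in $L^2_h(D)$.

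The key idea for (b) is an orthogonality observation: distinct terms $f_n(z)w^n$ and $f_m(z)w^m$ are orthogonal in $L^2(D)$, because for $n\neq m$ the angular integral $\int_0^{2\pi}\!\cdots\!\int_0^{2\pi} e^{i\langle n-m,\theta\rangle}\,d\theta$ vanishes — the fiber $\{\|w\|<e^{-\phi(z)}\}$ is $N$-circled, so the Rotationally-invariant measure makes the monomials $w^n$ pairwise orthogonal over each fiber. Granting this, I would argue as follows. Let $D_j\uparrow D$ be an exhaustion by relatively compact Hartogs subdomains (shrink $G$ slightly and replace $e^{-\phi}$ by a slightly smaller radius). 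On each $D_j$ the series converges uniformly, hence in $L^2(D_j)$, and by orthogonality $\sum_n \|f_n w^n\|_{L^2(D_j)}^2 = \|f\|_{L^2(D_j)}^2 \le \|f\|_{L^2(D)}^2$. Letting $j\to\infty$ and using monotone convergence on each fixed $n$ gives $\sum_n \|f_n w^n\|_{L^2(D)}^2 \le \|f\|_{L^2(D)}^2 < \infty$; in particular every $f_n w^n\in L^2_h(D)$ and the partial sums form a Cauchy sequence in $L^2_h(D)$. Finally I would check that the $L^2$-limit of the partial sums equals $f$: the partial sums converge to $f$ pointwise (even locally uniformly) and in $L^2$, so the two limits agree a.e., hence \eqref{eq:hartogs} converges to $f$ in $L^2_h(D)$.

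The main obstacle I anticipate is purely measure-theoretic rather than conceptual: justifying the interchange of summation and integration and the passage to the limit along the exhaustion when $\phi$ can be $+\infty$ on a polar set and the fibers can degenerate. I would handle this by working on the open set where $\phi<+\infty$ (whose complement is polar, hence Lebesgue-null in $\cc^M$ when $M=1$, and null in general for our purposes since $D$ sits over $\{\phi<+\infty\}$ up to a null set), invoking Tonelli for the nonnegative integrands, and using that a locally uniformly convergent series of holomorphic functions can be integrated term by term on compact sets. No delicate estimate beyond these standard facts is needed, so the proof is short.
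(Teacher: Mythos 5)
Your proof is correct and follows essentially the same route as the paper: the paper exhausts $D$ by the Hartogs subdomains $\inter L_j$ with $L_j=(K_j\times\ds(0,j)^N)\cap D$, uses pairwise orthogonality of the terms $f_n(z)w^n$ there, and lets $j\to\infty$ exactly as in your Bessel-plus-monotone-convergence argument, identifying the $L^2$ limit with $f$ via pointwise convergence. The only slip is the claim of uniform convergence on your shrunk domains $D_j$: since $e^{-\phi}$ is merely lower semicontinuous, $\overline{D_j}$ need not be contained in $D$ (nor be compact), but this is harmless because your argument only needs orthogonality and the Bessel inequality on $D_j$, which follow from locally uniform convergence on compact subsets, as you yourself note.
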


\begin{proof}[Proof of Lemma~\ref{lem:hartogs}~(\ref{en:hartogs2})]
Let $(K_j)_{j\ge 1}$ be a sequence of compacts subsets of $G$ such that it exhausts $G$, 
$K_j \subset \inter K_{j+1}$, and let $L_j:= K_j\times \ds(0,j)^N \cap D$.
Then $\inter L_j$ are Hartogs domains with $N$--circled fibers that exhaust $D$.

The functions $f_n(z)w^n$ are pairwise orthogonal on $\inter L_j$
and the series (\ref{eq:hartogs}) is convergent in $L^2_h(\inter L_j)$.
Therefore, we have for any $j\ge 1$
\[
\int_D |f|^2\, d\lambda^{2(M+N)} \ge \int_{L_j} |f|^2\, d\lambda^{2(M+N)}
= \sum_{n\in\zz_+^N} \int_{L_j} |f_n(z)w^n|^2\, d\lambda^{2(M+N)}(z,w),
\]
where $\lambda^{2(M+N)}$ denotes $2(M+N)$--dimensional Lebesgue measure.
Taking $j \to\infty$, we finish the proof.
\end{proof}

The proofs rely heavily on the properties of subharmonic functions and their singularities,
and we use advanced $L^2$--extension techniques.
We remark, nevertheless, that in some cases (e.g.\ when $G=\cc$ and $\phi$ has logarithmic growth) 
one can explicitly find (infinitely many) functions $f_n(z)w^n$
or, respectively, prove that no such function other than zero exists.
These functions $f_n$ are simply polynomials with zeroes and degrees determined by
Corollary~\ref{cor:sh} (cf.~Proposition~\ref{prop:fin}).

\section{Singularities of subharmonic functions}

For a function $u$ subharmonic in a neighborhood of $a\in\cc$ put
\begin{align*}
m(u,a,r) &:= \frac 1{2\pi} \int_0^{2\pi} u(a+re^{it})\, dt,\\
M(u,a,r) &:= \max_{|z|=r} u(z).
\end{align*}

We define the Lelong number of $u$ at $a$ as
\[
\nu(u,a) := \lim_{r\to 0}\frac {M(u,a,r)}{\log r}.
\]
It is well known (see e.g.\ \cite{bib:ran}) that
\[
\lim_{r\to 0}\frac {M(u,a,r)}{\log r} = \lim_{r\to 0}\frac {m(u,a,r)}{\log r} = 
\frac 1{2\pi} \Delta u(\{a\}) \in [0,+\infty),
\]
where $\frac 1{2\pi}\Delta u$ denotes the Riesz measure of the function $u$.

\begin{proposition}\label{thm:siu}
Let $u$ be a subharmonic function on a domain $D\subset \cc$.
Then for every $\delta>0$ the set
\[
\{z\in D:\, \nu(u,z) \ge \delta\}
\]
is finite.
\end{proposition}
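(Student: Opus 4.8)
The plan is to translate the condition $\nu(u,z)\ge\delta$ into a statement about the atoms of the Riesz measure $\mu:=\frac1{2\pi}\Delta u$ and then to count those atoms by a total-mass argument. The analytic input is already available: the displayed fact quoted from \cite{bib:ran} says precisely that for every point $z\in D$
\[
\nu(u,z)=\lim_{r\to0}\frac{M(u,z,r)}{\log r}=\frac1{2\pi}\Delta u(\{z\})=\mu(\{z\}).
\]
Hence the set in question is exactly
\[
S_\delta:=\{z\in D:\,\mu(\{z\})\ge\delta\},
\]
the collection of atoms of $\mu$ whose mass is at least $\delta$, and the whole problem is reduced to bounding the number of such atoms.

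For this I would argue by pigeonhole on the total mass. If $z_1,\dots,z_k\in S_\delta$ are distinct, then the singletons $\{z_1\},\dots,\{z_k\}$ are pairwise disjoint Borel subsets of $D$, so additivity of $\mu$ gives
\[
k\,\delta\le\sum_{j=1}^{k}\mu(\{z_j\})=\mu\big(\{z_1,\dots,z_k\}\big)\le\mu(D).
\]
The resulting estimate $k\le\mu(D)/\delta$ is independent of the chosen points, and so it bounds the cardinality of every finite subset of $S_\delta$ at once; consequently $S_\delta$ is itself finite, with $\#S_\delta\le\mu(D)/\delta$.

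Because the Lelong-number identity is quoted rather than reproved, the only point genuinely carrying the argument is the last inequality, in which the count is controlled by the total Riesz mass $\mu(D)$ of $u$: it is this finite positive number that, together with the uniform lower bound $\delta$ on the individual atomic masses, forces $S_\delta$ to be finite. The main thing to keep straight is therefore the bookkeeping of the measure $\mu$ — that it is genuinely additive over the disjoint singletons and that each qualifying point contributes its full mass $\mu(\{z\})\ge\delta$ — after which finiteness is immediate from $k\,\delta\le\mu(D)$.
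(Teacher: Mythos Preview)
Your reduction to atoms of the Riesz measure $\mu=\frac{1}{2\pi}\Delta u$ is exactly right, and the pigeonhole estimate $k\,\delta\le\mu(E)$ is the correct mechanism. The gap is in the last step: you bound $k$ by $\mu(D)/\delta$, but nothing guarantees $\mu(D)<\infty$. For a subharmonic function on all of $\cc$ the total Riesz mass is typically infinite; e.g.\ if $f$ is an entire function with simple zeros at every positive integer, then $u=\log|f|$ is subharmonic on $\cc$ with $\nu(u,n)=1$ for all $n\in\nn$, so $\mu(\cc)=\infty$ and your bound is vacuous. (This example also shows that the set $\{\nu(u,z)\ge 1\}$ can genuinely be infinite, so the proposition should be read as asserting local finiteness.)

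The paper's proof supplies exactly the missing localization: the Riesz measure of a subharmonic function is a Radon measure, hence $\mu(K)<\infty$ for every compact $K\subset D$. Running your pigeonhole argument on $K$ instead of $D$ gives $\#\big(K\cap S_\delta\big)\le\mu(K)/\delta<\infty$, which is what the paper states and what is actually used later (to conclude that $\{z:\nu(\phi,z)>0\}$ is at most countable). So your argument is essentially the paper's, but you need to restrict to compacts before invoking finiteness of the mass.
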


\begin{proof}
The Riesz measure $\frac 1 {2\pi}\Delta u$ is finite on every compact set
$K\subset D$. Therefore, a set $K\cap \{z\in D:\, \nu(u,z) \ge \delta\}$
must be finite.
\end{proof}

Recall the definition of the integrability index of $u$ at $a$
\begin{align*}
\iota(u,a) &:=\inf I_{u,a},\\
\text{where } I_{u,a} &:= \{t>0:\, e^{-\frac {2u}{t}} \text{ is integrable in some neighborhood of } a\}.
\end{align*}
It is clear that $e^{-\frac {2u}{s}}$ is integrable for every $s>\iota(u,a)$, so $I_{u,a}$
is an interval.

The equality of the Lelong number and the integrability index for subharmonic functions 
is a classical result. However,
we did not find any direct reference and 
we give the proof for the sake of completeness.
\begin{proposition}\label{thm:sko}
Let $u$ be a subharmonic function on a neighborhood of $a\in\cc$. Then
\[
\iota(u,a)=\nu(u,a).
\]
Moreover, $e^{-2u}$ is integrable in a neighborhood of $a$ if and only if $\nu(u,a)<1$.
\end{proposition}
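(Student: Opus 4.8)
The plan is to prove both inequalities $\iota(u,a)\le\nu(u,a)$ and $\nu(u,a)\le\iota(u,a)$ separately, working (after a translation) with $a=0$ and writing $\nu=\nu(u,0)$. For the inequality $\iota(u,0)\le\nu(u,0)$, I would use the lower bound on $u$ coming from the Lelong number: since $\nu(u,0)=\lim_{r\to0}M(u,0,r)/\log r$, for every $\epsilon>0$ there is $\rho>0$ with $M(u,0,r)\ge(\nu+\epsilon)\log r$ for $0<r<\rho$, hence $u(z)\ge(\nu+\epsilon)\log|z|$ on a punctured disc; equivalently $e^{-2u(z)/t}\le|z|^{-2(\nu+\epsilon)/t}$ there, and the latter is locally integrable on $\cc$ precisely when $2(\nu+\epsilon)/t<2$, i.e.\ $t>\nu+\epsilon$. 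Letting $\epsilon\to0$ gives $\iota(u,0)\le\nu$. Wait—this needs $u(z)\ge(\nu+\epsilon)\log|z|$, which requires a \emph{lower} bound from the \emph{maximum}; that is not automatic. Instead I would invoke the Riesz representation: near $0$, write $u(z)=\int\log|z-\zeta|\,d\mu(\zeta)+h(z)$ with $\mu=\frac1{2\pi}\Delta u$ the Riesz measure, $h$ harmonic, and $\mu(\{0\})=\nu$. Splitting off the point mass at $0$, $u(z)=\nu\log|z|+v(z)$ where $v$ is subharmonic near $0$ (the potential of $\mu$ restricted to a punctured neighborhood, plus $h$), and $v$ is bounded below near $0$ after possibly shrinking—more precisely $v\ge -C$ on a small disc is false in general, but $e^{-2v/t}$ being locally integrable for every $t>0$ is the point, and follows because $v$ has Lelong number $0$ at $0$.

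So the argument really reduces to: \emph{if $\nu(u,0)=0$ then $e^{-2u/t}$ is integrable near $0$ for every $t>0$}. I would prove this via the classical $L^\infty$/$L^1$ estimate. First, by the Brelot–Cartan / Riesz decomposition, write $u=\nu\log|z|+v$ with $\nu(v,0)=0$, so it suffices to show $e^{-2v/t}\in L^1_{\mathrm{loc}}(0)$ for all $t>0$ when $\nu(v,0)=0$. For this, decompose the Riesz measure of $v$ on a small disc $\ds(0,R)$ as $\mu=\mu_1+\mu_2$ where $\mu_1$ is the restriction to an even smaller disc $\ds(0,r)$ of small total mass $\mu_1(\cc)=c<t/2$, and estimate $v\ge v_1+v_2$ with $v_i$ the potentials; $v_2$ is bounded below near $0$, while for $v_1(z)=\int_{|\zeta|<r}\log|z-\zeta|\,d\mu_1(\zeta)$ one has, by Jensen's inequality applied to the probability measure $\mu_1/c$,
\[
\int_{\ds(0,r)}e^{-2v_1(z)/t}\,d\lambda(z)=\int_{\ds(0,r)}\Bigl(\prod\text{(...)}\Bigr)\le \int_{\ds(0,r)}\frac{1}{c}\int_{|\zeta|<r}|z-\zeta|^{-2c/t}\,d\mu_1(\zeta)\,d\lambda(z),
\]
which is finite since $2c/t<1$ makes each inner integral $\int_{\ds(0,r)}|z-\zeta|^{-2c/t}\,d\lambda(z)$ uniformly bounded. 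This gives $\iota(v,0)=0$, hence $\iota(u,0)\le\nu(u,0)$.

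For the reverse inequality $\nu(u,0)\le\iota(u,0)$, I would argue by contraposition using the sub-mean-value inequality. If $e^{-2u/t}$ is integrable on $\ds(0,R)$, then in particular $\int_0^R\bigl(\int_0^{2\pi}e^{-2u(re^{i\theta})/t}\,d\theta\bigr)r\,dr<\infty$, and by Jensen's inequality $\int_0^{2\pi}e^{-2u(re^{i\theta})/t}\,d\theta\ge 2\pi\,e^{-2m(u,0,r)/t}$; since $m(u,0,r)/\log r\to\nu$, we have $m(u,0,r)\le(\nu-\epsilon)\log r$ near $0$ whenever $\nu>\iota$ would force... more carefully: $m(u,0,r)=\nu\log r+O(1)$ from below as $r\to0$ would give $e^{-2m(u,0,r)/t}\sim r^{-2\nu/t}$, and $\int_0 r^{-2\nu/t}\cdot r\,dr<\infty$ forces $2\nu/t<2$, i.e.\ $t>\nu$; thus $\iota(u,0)\ge\nu(u,0)$. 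Combining the two inequalities yields $\iota(u,a)=\nu(u,a)$, and the final assertion is the case ``$t=1$'': $e^{-2u}$ is integrable near $a$ iff $1\in \inter I_{u,a}$ (interior, since $I_{u,a}$ is an interval) together with the boundary behavior, which forces the condition $\nu(u,a)<1$ once one checks that at $t=\nu$ integrability fails (from the lower estimate $m(u,0,r)\le\nu\log r+O(1)$ is the wrong direction—here one uses that $u\le \nu\log|z|+C'$ near $0$ is \emph{not} available, so instead: if $\nu<1$ apply the first inequality with $t=1>\nu\ge\iota$; if $\nu\ge1$ apply the second to get $\iota\ge1$ so $e^{-2u}\notin L^1_{\mathrm{loc}}$). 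The main obstacle I anticipate is the careful bookkeeping in the potential decomposition $u=\nu\log|z|+v$ and making the Jensen-type estimate on $\int e^{-2v_1/t}$ fully rigorous, in particular the uniform bound on $\int_{\ds(0,r)}|z-\zeta|^{-2c/t}\,d\lambda(z)$ independent of $\zeta\in\ds(0,r)$.
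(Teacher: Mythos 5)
Your overall strategy is sound and close to the paper's: the direction $\iota(u,0)\le\nu(u,0)$ is proved there by exactly your Jensen--Fubini estimate on the Riesz potential (the paper simply chooses $r$ with $\mu(\ds(r))<t$, which absorbs the atom at $0$ and makes your preliminary reduction unnecessary), and your circle-average argument for $\nu(u,0)\le\iota(u,0)$ is a legitimate, slightly more elementary alternative to the paper's pointwise bound $u(z)\le\nu(u,0)\log|z|+C$. Two points, however, need repair. First, your reduction ``it suffices to treat $\nu=0$'' is not automatic: from $u=\nu\log|z|+v$ and integrability of $e^{-2v/s}$ for every $s>0$ you still must control the product $|z|^{-2\nu/t}e^{-2v/t}$; this needs a H\"older argument (choose $p>1$ with $2\nu p/t<2$ and the conjugate exponent for $e^{-2v/t}$), or, simpler, drop the reduction and run your Jensen estimate with the measure $\nu\delta_0+\mu_1$ of total mass $<t$, which is in effect what the paper does. (Your worry about the uniform bound on $\int_{\ds(r)}|z-\zeta|^{-2c/t}\,d\lambda^2(z)$ is not an obstacle: it is dominated by $\int_{\ds(2r)}|w|^{-2c/t}\,d\lambda^2(w)$, finite since $2c/t<2$.)

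Second, and more seriously, your treatment of the ``Moreover'' statement has a gap precisely at the borderline case $\nu(u,0)=1$: knowing only $\iota(u,0)\ge 1$ does not exclude integrability of $e^{-2u}$, since a priori the interval $I_{u,0}$ could contain its left endpoint. What rules this out is exactly the estimate you declare ``not available'': $u(z)\le\nu(u,0)\log|z|+C$ near $0$, which does hold and is derived in the paper from the Riesz decomposition (restrict the potential to $\{|z-\zeta|\le|z|\}$, where $\log|z-\zeta|\le\log|z|\le 0$ and the mass is at least $\nu(u,0)$); it yields $e^{-2u}\ge c|z|^{-2\nu}$, which is non-integrable whenever $\nu\ge1$. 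Equivalently, in your circle-average setup you need $m(u,0,r)\le\nu\log r+O(1)$, not merely $m(u,0,r)\le(\nu-\epsilon)\log r$, which you assert at one point but then disclaim; it follows from Jensen's formula, since $m(u,0,r)-\nu\log r$ is nondecreasing in $r$. With either of these ingredients the endpoint case closes and the rest of your argument goes through.
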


\begin{proof}
Let us assume that $a=0$ and $u$ is subharmonic on a neighborhood of the closure
of the disc $\ds(r) = \{|\zeta|<r\}$ with $0<r<\frac 12$.
Then $u$ can be decomposed (cf. Theorem~3.7.9 in \cite{bib:ran}) as
\[
 u(z) = h(z) + \int_{\ds(r)} \log|z-\zeta| \, d\mu(\zeta), \quad z \in \ds(r),
\]
where $h$ is a bounded harmonic function on $\ds$ and $\mu = \frac 1{2\pi} \Delta u|_{\ds(r)}$.
Therefore, there exists a constant $C>0$ such that
\begin{align*}
 u(z)
 &\le C + \int_{\ds(r) \cap \{ |z-\zeta| \le |z|\}} \log|z-\zeta|\, d\mu(\zeta)\\
 &\le C + \mu(\ds(r) \cap \{ |z-\zeta| \le |z|\}) \log|z| \le C + \nu(u,0) \log|z|.
\end{align*}
For this reason, if $e^{-\frac {2u}t}$ is integrable in a neighborhood of $0$ then
$t>\nu(u,0)$, which yields inequality $\nu(u,0) \le \iota(u,0)$.

To prove the other inequality, take numbers $t>\nu(u,0)$ and $r<\frac 12$ so small that
$\mu(\ds(r)) < t$. By the Jensen inequality %for a function $e^{-\frac{2\mu((\ds(r)))}{t} x}$
we obtain that
\begin{align*}
 \exp\left(-\frac{2u(z)}t \right) 
 &\le C' \exp\left(-\frac {2 \mu(\ds(r))} {t} \int_{\ds(r)} \log|z-\zeta|\, \frac {d\mu(\zeta)}{\mu(\ds(r))} \right)\\
 &\le C'' \int_{\ds(r)} |z-\zeta|^{-\frac {2 \mu(\ds(r))}{t}}\, d\mu(\zeta),
\end{align*}
for some constants $C',C''>0$. We have then
\[
 \int_{\ds(r)} e^{-\frac{2u}t }\, d\lambda^2
 \le C'' \int_{\ds(r)} \left(  \int_{\ds(r)} |z-\zeta|^{-\frac {2 \mu(\ds(r))}{t}}\,d\lambda^2(z) 
                       \right) \, d\mu(\zeta) < +\infty,
\]
and therefore, $t>\iota (u,0)$.
This finishes the proof.
\end{proof}

Both Proposition~\ref{thm:siu} and Proposition~\ref{thm:sko}
have their highly nontrivial multidimensional  counterparts by, respectively, Y.--T.~Siu and H.~Skoda
(for references and discussion on singularities of plurisubharmonic functions see e.g.~\cite{bib:kis1, bib:kis2}).

\begin{proposition}\label{prop:sh2} {\ }
\begin{enumerate}
\item\label{en:sh2:1}
Let $u\in\SH(\ds_*)$ be such that $A=\liminf_{r\to 0} \frac {M(u,0,r)}{\log r} > -\infty$.
Then
\begin{enumerate}
\item $A<+\infty$ and $u_0(z):= u(z) - A\log |z|$ is subharmonic on $\ds$;
\item $e^{-2u}$ is integrable in a neighborhood of $0$ if and only if $A<1$.
\end{enumerate}
\item\label{en:sh2:2}
Let $u\in\SH(\cc\setminus\ds)$ be such that $\limsup_{|z|\to \infty} \frac {u(z)}{\log|z|} =B$.
\begin{enumerate}
\item If $B<+\infty$, then  $C:=\limsup_{|z|\to \infty} u(z) - B\log |z| < +\infty$.
\item If $B<+\infty$, then
$e^{-2u}$ is integrable in a neighborhood of $\infty$ if and only if $B>1$.
\item  If  $\lim_{|z|\to \infty} \frac {u(z)}{\log|z|} =+\infty$ 
then $e^{-\frac {2u} t}$ is integrable in a neighborhood of $\infty$ for every $t>0$.
\end{enumerate}
\end{enumerate}
\end{proposition}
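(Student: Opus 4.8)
The plan is to reduce everything to the two one-point results already proved (Propositions \ref{thm:siu} and \ref{thm:sko}), after first showing that subtracting the appropriate multiple of $\log|z|$ removes the singularity. For part (\ref{en:sh2:1}), I would start with the classical fact that for $u$ subharmonic on the punctured disc $\ds_*$, the function $r\mapsto m(u,0,r)$ is a convex function of $\log r$ on $(0,1)$ (Riesz). Hence $m(u,0,r)/\log r$ has a limit as $r\to 0$, equal to $\sup$ or $\inf$ of the difference quotients; combined with the hypothesis $A=\liminf M(u,0,r)/\log r>-\infty$ and the standard comparison $m(u,0,r)\le M(u,0,r)$ together with the Harnack-type estimate relating $M$ and $m$ for subharmonic functions near a point, one gets that $A<+\infty$ and that $A=\lim_{r\to0} m(u,0,r)/\log r$. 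Then $u_0(z):=u(z)-A\log|z|$ is subharmonic on $\ds_*$ and bounded above near $0$ (its circular means are bounded, again by convexity in $\log r$ and the computed slope), so it extends subharmonically across $0$ by the removable singularity theorem for subharmonic functions. Once $u_0\in\SH(\ds)$, we have $\nu(u_0,0)\ge 0$ and $\nu(u,0)=A+\nu(u_0,0)$; applying the ``moreover'' part of Proposition \ref{thm:sko} to $u$ (which is subharmonic on a full neighborhood of $0$ after the extension argument shows the Riesz mass is finite) gives that $e^{-2u}$ is integrable near $0$ iff $\nu(u,0)<1$, and since $\nu(u_0,0)\ge 0$ this is implied by $A<1$; for the converse, if $A\ge 1$ then $u(z)\le C+A\log|z|\le C+\log|z|$ near $0$ by the same upper bound used in the proof of Proposition \ref{thm:sko}, so $e^{-2u}\ge c|z|^{-2}$, which is not integrable. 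This proves (\ref{en:sh2:1}).

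For part (\ref{en:sh2:2}), the natural move is to invert: set $v(\zeta):=u(1/\zeta)$ for $\zeta\in\ds_*$, which is subharmonic on $\ds_*$ because $\zeta\mapsto 1/\zeta$ is holomorphic. Under this change of variables $\log|z|=-\log|\zeta|$, so the hypothesis $\limsup_{|z|\to\infty}u(z)/\log|z|=B$ translates to $\liminf_{|\zeta|\to0}v(\zeta)/\log|\zeta|=-B$. Thus (ai) and (aii) of part (\ref{en:sh2:2}) are exactly part (\ref{en:sh2:1}) applied to $v$ with $A=-B$: indeed $-B<+\infty$ iff $B>-\infty$, but we must also observe $B\le\ldots$; more carefully, the hypothesis that the $\limsup$ equals a finite number $B$ forces $-B>-\infty$, so (ai) of (\ref{en:sh2:1}) gives $C':=\limsup_{|\zeta|\to0}(v(\zeta)+B\log|\zeta|)<+\infty$, which is precisely $C=\limsup_{|z|\to\infty}(u(z)-B\log|z|)<+\infty$. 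For the integrability statement (aiii)/(b), one checks the Jacobian: $e^{-2u(z)}\,d\lambda^2(z)$ near $\infty$ corresponds to $e^{-2v(\zeta)}|\zeta|^{-4}\,d\lambda^2(\zeta)$ near $0$, i.e.\ to $e^{-2(v(\zeta)+2\log|\zeta|)}\,d\lambda^2(\zeta)$. So $e^{-2u}$ is integrable near $\infty$ iff $e^{-2\widetilde v}$ is integrable near $0$ where $\widetilde v(\zeta):=v(\zeta)+2\log|\zeta|$, and $\widetilde v$ satisfies the hypothesis of (\ref{en:sh2:1}) with constant $\widetilde A=-B+2$. By (\ref{en:sh2:1})(b) this holds iff $\widetilde A<1$, i.e.\ iff $B>1$, giving (b). Finally, if $u(z)/\log|z|\to+\infty$ then for every $t>0$ the function $v_t(\zeta):=v(\zeta)/t+2\log|\zeta|$ has $\liminf v_t(\zeta)/\log|\zeta|=-\infty<1$... here one uses instead that $v/t$ has Lelong-type behaviour: $v(\zeta)\le -N\log|\zeta|$ eventually for every $N$, so $v(\zeta)/t+2\log|\zeta|\to -\infty\cdot(\text{sign})$, making $e^{-2v_t}$ integrable near $0$ by Proposition \ref{thm:sko} (the integrability index is $0$); translating back gives (aiii).

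The main obstacle I anticipate is the first reduction in part (\ref{en:sh2:1}): going from the one-sided hypothesis on $M(u,0,r)$ (a $\liminf$, and on the maximum rather than the mean) to the clean conclusion that $u-A\log|z|$ extends subharmonically across $0$. The key technical inputs are (i) convexity of $m(u,0,\cdot)$ in $\log r$, which forces the $\liminf$ on $M$ to actually be a genuine limit and to coincide with the limit of $m(u,0,r)/\log r$ (using $m\le M$ and a Borel--Carathéodory / Harnack comparison bounding $M$ in terms of $m$ on a slightly larger circle), and (ii) the removable singularity theorem: a subharmonic function on $\ds_*$ that is bounded above near $0$ extends subharmonically to $\ds$. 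Everything after that is bookkeeping with the inversion $z\mapsto 1/z$ and tracking the Jacobian factor $|\zeta|^{-4}$, which shifts the relevant constant by $+2$ and accounts for the reversal of inequalities (``$<1$'' near $0$ becomes ``$>1$'' near $\infty$).
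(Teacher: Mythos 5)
Your outline for part (\ref{en:sh2:2}) (the inversion $z\mapsto 1/\zeta$, the identity $\limsup_{|z|\to\infty}u(z)/\log|z|=-\liminf_{r\to 0}M(\widetilde u,0,r)/\log r$, and the Jacobian factor $|\zeta|^{-4}$ shifting the constant by $2$) is correct and matches the paper's intended argument, in fact spelling out details the paper leaves implicit. The genuine gap is in part (\ref{en:sh2:1}), at exactly the step the paper is careful about: the extension of $u_0=u-A\log|z|$ across $0$. You justify it by saying $u_0$ is bounded above near $0$ \emph{because its circular means are bounded}; that inference is false. The function $u(z)=\re(1/z)$ is harmonic on $\ds_*$ with $m(u,0,r)\equiv 0$, yet it is unbounded above near $0$ and has no subharmonic extension. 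Similarly, the ``Harnack-type estimate relating $M$ and $m$'' that you invoke to get $A=\lim_{r\to 0}m(u,0,r)/\log r$ does not exist for subharmonic functions near a puncture: for $u=\re(1/z)+\log|z|$ one has $m(u,0,r)/\log r\to 1$ while $M(u,0,r)/\log r\to-\infty$, so no upper bound of $M$ by means on nearby circles can hold in general. What makes the statement true is the $\liminf$ hypothesis on $M$ itself: it yields the pointwise bound $u(z)\le(A-\tfrac\epsilon2)\log|z|$ near $0$ for every $\epsilon>0$, hence each $u_\epsilon:=u-(A-\epsilon)\log|z|$ satisfies $u_\epsilon\le\tfrac\epsilon2\log|z|$ near $0$ and extends subharmonically; the maximum principle on the fixed circle $|z|=\tfrac12$ bounds the $u_\epsilon$ uniformly, and letting $\epsilon\downarrow 0$ gives $u_0$ bounded above near $0$. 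This is the paper's proof, and it cannot be replaced by an argument using circular means alone.

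There are further, smaller defects. In (\ref{en:sh2:1})(b) your implication ``$A<1\Rightarrow$ integrable'' is argued from $\nu(u,0)<1$ ``since $\nu(u_0,0)\ge 0$'', which runs the wrong way: $\nu(u,0)=A+\nu(u_0,0)\ge A$, so you need $\nu(u_0,0)=0$. That is true, but must be proved (compare $M(u,0,r)/\log r\to A+\nu(u_0,0)$ with the hypothesis that its $\liminf$ equals $A$). Moreover, when $A<0$ you cannot apply Proposition~\ref{thm:sko} to $u$ at all, since $u=u_0+A\log|z|$ is then unbounded above at $0$ and not subharmonic there; the paper treats this case separately by applying the argument to $u_0$ and using $|z|^{2A}\ge 1$ near $0$. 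Finally, in (\ref{en:sh2:2})(c) your inequality is reversed ($u(z)\ge N\log|z|$ gives $v(\zeta)\ge -N\log|\zeta|$, not $\le$), and the appeal to Proposition~\ref{thm:sko} is again inadmissible because $v/t+2\log|\zeta|$ is not subharmonic in a neighborhood of $0$ (it tends to $+\infty$ there); the correct and immediate argument is the paper's: $e^{-2u/t}\le|z|^{-2N/t}$ for large $|z|$, which is integrable near $\infty$ once $N>t$.
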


\begin{proof}(\ref{en:sh2:1}) 
First, observe that if $A>0$, then $u$ extends to a subharmonic function on $\ds$ as 
a nonpositive function in a neighborhood of $0$. Therefore, $A=\nu(u,a) <+\infty$.

For any $\epsilon >0 $ define the function
\[
u_\epsilon(z):= u(z) - A \log|z| + \epsilon \log|z|, \quad z\in\ds_*.
\]
Notice that $u_\epsilon (z) \le \tfrac 12 \epsilon\log|z|$ near $0$,
and hence, it can be extended to a subharmonic function on $\ds$.
The functions $u_\epsilon$ are uniformly bounded from above in a neighborhood of $0$ because
for every $\epsilon>0$ we have 
\[
u_\epsilon (z) < M(u,0,\tfrac 12) +  A\log 2, \quad |z| = \tfrac 12.
\]
Therefore, a function $u_0(z) = \lim_{\epsilon} u_\epsilon(z)$, $0<|z|<1$, is also bounded 
from above near $0$ and is subharmonic on $\ds$.

Statement (ii) follows from Proposition~\ref{thm:sko} if $A\ge 0$.

If $A<0$ then we apply the same argument to the subharmonic function $u(z)-A\log|z|$ 
to obtain that $e^{-2u(z)}|z|^{2A}$, and hence also $e^{-2u}$, is integrable around $0$.

(\ref{en:sh2:2}) 
Notice that $\limsup_{|z|\to \infty} \frac {u(z)}{\log|z|} = 
\limsup_{r\to \infty} \frac {M(u,0,r)}{\log r}$, and therefore,
$\widetilde u(z) := u(\tfrac 1z)$ satisfies (\ref{en:sh2:1})
with $A=-B$ provided that $B<+\infty$. 
Then the function $\widetilde u(z) + B\log|z|$ is subharmonic on $\ds$ and 
$C$ is its value at $0$.

If $\lim_{|z|\to \infty} \frac {u(z)}{\log|z|} =+\infty$ then $e^{-u(z)} < \tfrac 1{|z|^B}$ 
for every $B>0$ and sufficiently large $|z|$.
\end{proof}

\begin{corollary}\label{cor:sh}Let $k\in\zz$.
\begin{enumerate}
\item Let $u$ be a subharmonic function in a neighborhood of a point $a$. Then
$|z-a|^{2k}e^{-2u(z)}$ is integrable in a neighborhood of $a$ if and only if
\[
k> \nu(u,a) -1.
\]
\item Let $u$ be a subharmonic function on $\{|z|>R\}$ for some $R>0$
such that either $\limsup_{|z|\to \infty} \frac {u(z)}{\log|z|} =B<+\infty$ 
or $\lim_{|z|\to \infty} \frac {u(z)}{\log|z|} =B =+\infty$.
Then $|z|^{2k}e^{-2u(z)}$ is integrable in some neighborhood of $\infty$ if and only if
\[
k< B-1.
\]
\end{enumerate}
\end{corollary}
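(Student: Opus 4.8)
The strategy is to absorb the polynomial factor into the exponential and then invoke Proposition~\ref{prop:sh2}. Write $|z-a|^{2k}e^{-2u(z)}=e^{-2v(z)}$ with $v(z):=u(z)-k\log|z-a|$, and $|z|^{2k}e^{-2u(z)}=e^{-2w(z)}$ with $w(z):=u(z)-k\log|z|$. The key observation is that $\log|\cdot-a|$ (resp.\ $\log|\cdot|$) is harmonic away from its singularity, so $v$ is subharmonic on the punctured disc $\ds_*$ (resp.\ $w$ is subharmonic on $\{|z|>R\}$), no matter the sign of $k\in\zz$.

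For part (a), translating and shrinking the neighborhood we may assume $a=0$ and that $u\in\SH$ near $\overline{\ds}$, so $v\in\SH(\ds_*)$. From $M(v,0,r)=M(u,0,r)-k\log r$ we get $\lim_{r\to 0}M(v,0,r)/\log r=\nu(u,0)-k=:A$, which is finite since $\nu(u,0)\in[0,+\infty)$; in particular $A>-\infty$. Hence Proposition~\ref{prop:sh2}(a)(ii) applies and tells us that $e^{-2v}$ is integrable in a neighborhood of $0$ if and only if $A<1$, that is, $k>\nu(u,0)-1$.

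For part (b), $w$ is subharmonic on $\{|z|>R\}$. If $B<+\infty$, then $\limsup_{|z|\to\infty}w(z)/\log|z|=B-k<+\infty$, and (after an inessential rescaling bringing the domain to the form required by the proposition) Proposition~\ref{prop:sh2}(b)(ii) gives that $e^{-2w}$ is integrable near $\infty$ if and only if $B-k>1$, i.e.\ $k<B-1$. If instead $\lim_{|z|\to\infty}u(z)/\log|z|=+\infty$, then also $\lim_{|z|\to\infty}w(z)/\log|z|=+\infty$, so Proposition~\ref{prop:sh2}(b)(iii) with $t=1$ shows $e^{-2w}$ is integrable near $\infty$, while $k<B-1=+\infty$ holds automatically, so the equivalence is trivially true.

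I do not expect a real obstacle: the only delicate points are matching the underlying domains to the normalizations used in Proposition~\ref{prop:sh2} and checking that $A=\nu(u,0)-k$ is finite so that part (a) of that proposition is applicable; the subharmonicity of $v$ and $w$ is immediate because the term subtracted from $u$ is harmonic off the single point where it is singular.
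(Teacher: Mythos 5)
Your proof is correct and is essentially the paper's own argument: the paper's proof of this corollary is precisely ``apply Proposition~\ref{prop:sh2} to $u-k\log|z|$,'' and you have simply spelled out the routine verifications (subharmonicity off the singularity, the identity $M(u-k\log|z|,0,r)=M(u,0,r)-k\log r$, finiteness of $A=\nu(u,a)-k$, and the trivial case $B=+\infty$). No discrepancies to report.
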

\begin{proof}
Apply Proposition~\ref{prop:sh2} to the function $u-k\log|z|$.
\end{proof}

\section{$L^2$--tools}
\subsection{H\"ormander--Bombieri--Skoda theorem}

The main tool we are going to use in the sequel is Proposition~\ref{prop:sko_one} which follows from 
theorem of Skoda
(a stronger version of Theorem~4.4.4 in \cite{bib:hor}).

\begin{theorem}[\cite{bib:sko2}]\label{thm:sko2}
Let $u$ be a plurisubharmonic function in a pseudoconvex domain $G \subset \cc^M$.
If $e^{-2u}$ is integrable in a neighborhood of a point $z_0\in G$, 
then for any $\epsilon>0$ one can find
an analytic function $f$ in $G$ such that $f(z_0)=1$ and 
\begin{equation}\label{eq:thm:sko2}
\int_G \frac {|f(z)|^2}{(1+|z|^2)^{M+\epsilon}} e^{-2u(z)} \, d\lambda^{2M}(z)< \infty.
\end{equation}
\end{theorem}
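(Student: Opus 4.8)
The plan is to follow the H\"ormander--Bombieri--Skoda scheme underlying Theorem~4.4.4 of \cite{bib:hor}, but with a weight tuned so that the resulting $L^2$ estimate already carries the factor $(1+|z|^2)^{-M-\epsilon}$. One constructs $f$ in the form $f=\chi-v$, where $\chi$ is a smooth cut-off equal to $1$ near $z_0$ and $v$ solves $\bar\partial v=\bar\partial\chi$ on $G$ against a plurisubharmonic weight having a logarithmic pole at $z_0$ of the precise order that forces $v(z_0)=0$.

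First I would fix $\rho>0$ so small that $\overline{B(z_0,\rho)}\subset G$ and $e^{-2u}$ is integrable on $B(z_0,\rho)$, and choose $\chi\in C^\infty_c(B(z_0,\rho))$ with $\chi\equiv 1$ on $B(z_0,\rho/2)$. On $G$ I would take the weight
\[
\psi(z):=2u(z)+2M\log|z-z_0|+\epsilon\log(1+|z|^2),
\]
which is plurisubharmonic and, thanks to its last summand, satisfies $i\partial\bar\partial\psi\ge\frac{\epsilon}{(1+|z|^2)^2}\,i\partial\bar\partial|z|^2$. Since $G$ is pseudoconvex, H\"ormander's $L^2$ estimate for $\bar\partial$ (in the form valid for plurisubharmonic weights, obtained by the usual regularization of $\psi$; cf.~\cite{bib:hor}) produces $v\in L^2_{\mathrm{loc}}(G)$ with $\bar\partial v=\bar\partial\chi$ and
\[
\int_G|v|^2\,e^{-2u}|z-z_0|^{-2M}(1+|z|^2)^{-\epsilon}\,d\lambda^{2M}\le\frac1\epsilon\int_G|\bar\partial\chi|^2(1+|z|^2)^{2}\,e^{-2u}|z-z_0|^{-2M}(1+|z|^2)^{-\epsilon}\,d\lambda^{2M}.
\]
The right-hand side is finite because $\bar\partial\chi$ is supported in the compact annulus $\overline{B(z_0,\rho)}\setminus B(z_0,\rho/2)$, on which every factor except $e^{-2u}$ is bounded while $e^{-2u}$ is integrable.

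Next I would set $f:=\chi-v$; since $\bar\partial f=\bar\partial\chi-\bar\partial v=0$ we get $f\in\oo(G)$. On $B(z_0,\rho/2)$ one has $\bar\partial v=\bar\partial\chi=0$, so $v$ is holomorphic there, and because $e^{-2u}$ and $(1+|z|^2)^{-\epsilon}$ are bounded below by positive constants on that ball, the displayed estimate forces $\int_{B(z_0,\rho/2)}|v|^2|z-z_0|^{-2M}\,d\lambda^{2M}<\infty$; as $|z-z_0|^{-2M}$ is not locally integrable in $\cc^M$, this is possible only if $v(z_0)=0$, whence $f(z_0)=\chi(z_0)-v(z_0)=1$. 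To verify \eqref{eq:thm:sko2} I would split $G$: over $B(z_0,\rho)$ the holomorphic function $f$ is bounded while $e^{-2u}$ is integrable, so that part of the integral is finite; over $G\setminus B(z_0,\rho)$ one has $\chi\equiv0$, hence $f=-v$, and the elementary inequality $|z-z_0|\le(1+|z_0|)(1+|z|)\le\sqrt2\,(1+|z_0|)(1+|z|^2)^{1/2}$ gives $(1+|z|^2)^{-M}\le c^{-1}|z-z_0|^{-2M}$ for a constant $c=c(z_0,M)>0$, so that
\[
\int_{G\setminus B(z_0,\rho)}\frac{|f|^2\,e^{-2u}}{(1+|z|^2)^{M+\epsilon}}\,d\lambda^{2M}\le\frac1c\int_G|v|^2\,e^{-2u}|z-z_0|^{-2M}(1+|z|^2)^{-\epsilon}\,d\lambda^{2M}<\infty.
\]
Adding the two contributions gives \eqref{eq:thm:sko2}.

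The only point that needs real care is the choice of $\psi$: the exponent $2M$ of the logarithmic pole is forced from both ends --- it must be at least $2M$ for finiteness of the weighted integral to kill $v$ at $z_0$, and at most $2M$ for $|z-z_0|^{-2M}$ to still dominate $(1+|z|^2)^{-M}$ near infinity --- while the summand $\epsilon\log(1+|z|^2)$ does double duty, providing the strict positivity of $i\partial\bar\partial\psi$ that H\"ormander's theorem needs and producing the $\epsilon$ in \eqref{eq:thm:sko2}. Beyond this bookkeeping (and the standard regularization needed to apply H\"ormander's estimate to the singular $\psi$) I do not expect any essential obstacle.
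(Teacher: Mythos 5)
This statement is quoted in the paper as Skoda's theorem (a sharpening of Theorem~4.4.4 in H\"ormander's book); the paper gives no proof of it, so there is nothing internal to compare against. Your reconstruction is the standard H\"ormander--Bombieri--Skoda argument and, as far as I can check, it is correct: the weight $\psi=2u+2M\log|z-z_0|+\epsilon\log(1+|z|^2)$ is plurisubharmonic, the logarithmic pole of order exactly $2M$ simultaneously forces $v(z_0)=0$ (since $|z-z_0|^{-2M}$ is not locally integrable in $\cc^M$ and $v$ is holomorphic near $z_0$) and is dominated at infinity by $(1+|z|^2)^{-M}$ up to a constant depending on $z_0$, and the term $\epsilon\log(1+|z|^2)$ supplies the strict positivity $i\partial\bar\partial\psi\ge \epsilon(1+|z|^2)^{-2}\,i\partial\bar\partial|z|^2$ that yields the loss of only $(1+|z|^2)^{-\epsilon}$ instead of the $(1+|z|^2)^{-2}$ of H\"ormander's Theorem~4.4.2 (which is precisely what improves the exponent from $3M$ to $M+\epsilon$). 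The one point you should not wave away too quickly is the application of the $\bar\partial$-estimate with the singular, non-smooth weight $\psi$ together with the curvature lower bound: the clean statement you invoke is the Demailly-type formulation $\int|v|^2e^{-\psi}\le\int|g|^2_{i\partial\bar\partial\psi}e^{-\psi}$, and justifying it for merely plurisubharmonic $\psi$ requires a decreasing regularization on an exhaustion of $G$ by relatively compact pseudoconvex subdomains plus a weak-limit argument; you flag this but should cite or carry out that step explicitly, since it is exactly where the ``stronger version'' of H\"ormander's theorem lives. Apart from that, the bookkeeping (finiteness of the right-hand side because $\operatorname{supp}\bar\partial\chi$ avoids the pole and $e^{-2u}$ is integrable there, boundedness of the holomorphic $f$ on $\overline{B(z_0,\rho)}$, and the split of the final integral) is sound.
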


An immediate consequence of that theorem, even in its weaker version with the exponent
$3M$ instead of $M+\epsilon$,
is the following.

\begin{corollary}\label{prop:bdd}
Let $G\subset \cc^M$ be a bounded pseudoconvex domain and $\phi\in\PSH(G)$.
Then $\dim L_h^2(D_\phi) = \infty$.
\end{corollary}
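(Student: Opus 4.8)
The plan is to produce, for every multi-index $n\in\zz_+^N$, a nonzero function in $L^2_h(D_\phi)$ of the form $f_n(z)w^n$ with $f_n\in\oo(G)$; by Lemma~\ref{lem:hartogs}\,(\ref{en:hartogs2}) these live in $L^2_h(D_\phi)$ and, being of distinct homogeneity in $w$, are linearly independent, so $\dim L^2_h(D_\phi)=\infty$. To build $f_n$, fix any point $z_0\in G$ at which $\phi$ is finite (such points are dense since $\phi\in\PSH(G)$ is not $\equiv-\infty$), and apply Theorem~\ref{thm:sko2} with the plurisubharmonic weight $u$ taken to be a suitable multiple of $\phi$, say $u=(|n|+1)\,\phi$, where $|n|=n_1+\dots+n_N$. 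Since $\phi$ is locally bounded above on $G$ (being plurisubharmonic) and $G$ is bounded, $e^{-2u}=e^{-2(|n|+1)\phi}$ — wait, this need not be locally integrable near points where $\phi=-\infty$; so instead one simply picks $z_0$ with $\phi(z_0)>-\infty$ and, if necessary, shrinks to a neighborhood where $\phi$ is finite, hence $e^{-2u}$ is bounded there and integrable near $z_0$. Theorem~\ref{thm:sko2} then yields $f_n\in\oo(G)$ with $f_n(z_0)=1$ and
\[
\int_G \frac{|f_n(z)|^2}{(1+|z|^2)^{M+\epsilon}}\,e^{-2(|n|+1)\phi(z)}\,d\lambda^{2M}(z)<\infty .
\]
Since $G$ is bounded, $(1+|z|^2)^{M+\epsilon}$ is bounded on $G$, so in fact $\int_G |f_n(z)|^2 e^{-2(|n|+1)\phi(z)}\,d\lambda^{2M}(z)<\infty$.

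The remaining step is to compute $\int_{D_\phi}|f_n(z)w^n|^2\,d\lambda^{2(M+N)}$ by doing the $w$-integration first over each fiber $\{\|w\|<e^{-\phi(z)}\}$. For fixed $z$, $\int_{\|w\|<R}|w^n|^2\,d\lambda^{2N}(w) = c_n R^{2|n|+2N}$ for a constant $c_n>0$ depending only on $n$ and $N$ (elementary, using the polydisc-type region cut out by the maximum norm). With $R=e^{-\phi(z)}$ this gives
\[
\int_{D_\phi}|f_n(z)w^n|^2\,d\lambda^{2(M+N)}
 = c_n\int_G |f_n(z)|^2 e^{-(2|n|+2N)\phi(z)}\,d\lambda^{2M}(z).
\]
So the integrability we actually need is of $|f_n|^2 e^{-2(|n|+N)\phi}$; choosing instead $u=(|n|+N)\phi$ in the appeal to Theorem~\ref{thm:sko2} (still finite near $z_0$) makes this finite by the bounded-$G$ argument above. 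Hence $f_n(z)w^n\in L^2_h(D_\phi)$ and it is nonzero because $f_n(z_0)=1$.

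The only genuinely delicate point — and it is minor here — is making sure $e^{-2u}$ is locally integrable at the chosen $z_0$ so that Theorem~\ref{thm:sko2} applies; this is handled simply by selecting $z_0$ in the (dense) set where $\phi$ is finite and continuous-enough, so that the weight is bounded near $z_0$. Everything else (Fubini over the circled fibers, boundedness of $(1+|z|^2)^{M+\epsilon}$ on the bounded $G$, linear independence of the $f_n(z)w^n$) is routine. I would also note that the weaker form of Skoda's theorem with exponent $3M$ suffices, since only the boundedness of the polynomial weight on $G$ is used, not the sharp exponent.
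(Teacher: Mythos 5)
Your argument is essentially the paper's proof: for each $n\in\zz_+^N$ apply Theorem~\ref{thm:sko2} with the weight $u=(N+|n|)\phi$, use the boundedness of $G$ to discard the polynomial denominator, integrate over the circled fibers, and conclude that the functions $f_n(z)w^n$ form an infinite linearly independent family in $L^2_h(D_\phi)$. The one place where your write-up is off is the verification of the hypothesis of Theorem~\ref{thm:sko2}: choosing $z_0$ with $\phi(z_0)>-\infty$ does not make $e^{-2(N+|n|)\phi}$ \emph{bounded} near $z_0$ (finiteness of a plurisubharmonic function at, or even in a neighborhood of, a point gives no lower bound on it there). What is true, and suffices, is that $\phi(z_0)>-\infty$ forces the Lelong number of $(N+|n|)\phi$ at $z_0$ to vanish, so $e^{-2(N+|n|)\phi}$ is locally integrable at $z_0$ by the multidimensional integrability criterion of Skoda (the higher-dimensional counterpart of Proposition~\ref{thm:sko} alluded to in the paper); the paper's own proof leaves this hypothesis implicit as well, so your instinct to address it was right even if the justification given needs this repair.
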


\begin{proof}
It is enough to find for every $n\in\zz_+^N$
a function $f_n\in\oo(G)$ not identically equal to $0$ and
such that $f_n(z)w^n \in L_h^2(D_\phi)$. 
Then the sequence $(f_n(z)w^n)_n$ is a set of infinitely many linearly independent elements 
of $L_h^2(D_\phi)$.
Fix $n\in\zz_+^N$ and apply Theorem~\ref{thm:sko2} to the function $u=(N+|n|)\phi$.
Then there is a function $f_n\in\oo(G)$ such that
\begin{multline*}
\int_{D_\phi} |f_n(z)|^2 |w^n|^{2} \, d\lambda^{2(M+N)}(z,w)\\
< C \int_G \frac {|f_n(z)|^2}{(1+|z|^2)^{3M}} e^{-2(N+|n|)\phi(z)}\, d\lambda^{2M}(z) < +\infty,
\end{multline*}
where the constant $C$ depends on $M$, $N$, $n$, and $G$.
\end{proof}

\begin{proposition}\label{prop:sko_one}
Let $v$ be a subharmonic function on a domain $G\subset \cc$.
Suppose that there exists a compact subset $K\subset\subset G$  and a function $u\in\SH(G)$, 
$u\not\equiv -\infty$, such that for some $\epsilon>0$ and $C\in \rr$
\begin{align}
u(z) + (1+\epsilon)\log^+|z| \le C + v(z),\quad &z \in G\setminus K,\label{eq:sko_one:1}\\
\nu(u,z) \ge [\nu(v,z)], \quad &z \in K.\label{eq:sko_one:2}
\end{align}
Then there exists $f\in\oo(G)$, $f\not\equiv 0$, such that
\[
\int_G|f|^2 e^{-2v}\, d\lambda^2 <+\infty.
\]
\end{proposition}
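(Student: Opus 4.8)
The plan is to reduce the statement to a direct application of Skoda's theorem (Theorem~\ref{thm:sko2}), applied on the pseudoconvex domain $G\subset\cc$ with a carefully constructed plurisubharmonic (here: subharmonic) weight $\psi$ so that: (i) integrability of $e^{-2\psi}$ near a chosen point $z_0\in G$ guarantees the existence of a nonzero analytic $f$, and (ii) the resulting estimate \eqref{eq:thm:sko2} absorbs back into $\int_G|f|^2e^{-2v}\,d\lambda^2$. The natural choice is $\psi:=u+(\text{correction})$, built so that near $K$ it dominates $v$ up to the integer part of the Lelong numbers (this is where \eqref{eq:sko_one:2} enters), while near the ``ends'' of $G$ the growth condition \eqref{eq:sko_one:1} lets $v$ dominate $\psi$ plus the factor $\log(1+|z|^2)$ coming from the statement of Skoda's theorem.

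First I would fix a point $z_0\in G$ at which $u(z_0)>-\infty$ and, after possibly enlarging $K$, assume $z_0\in\inter K$ and also $\nu(v,z_0)=0$, which is possible since by Proposition~\ref{thm:siu} the set where $\nu(v,\cdot)\ge 1$ is finite. Next I would verify the local integrability of $e^{-2u}$ near $z_0$: since $\nu(u,z_0)$ may be positive, I would instead work at a point where $\nu(u,z_0)<1$ — again such points are dense by Proposition~\ref{thm:siu} applied to $u$ — or, more robustly, subtract an appropriate harmonic/logarithmic term. Then I apply Theorem~\ref{thm:sko2} with weight $u$ and a small $\epsilon'>0$ to obtain $f\in\oo(G)$, $f(z_0)=1$, with $\int_G |f|^2(1+|z|^2)^{-1-\epsilon'}e^{-2u}\,d\lambda^2<\infty$. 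It then remains to show that $|f|^2e^{-2v}\le C'|f|^2(1+|z|^2)^{-1-\epsilon'}e^{-2u}$ up to an $L^1$ error. On $G\setminus K$ this is exactly \eqref{eq:sko_one:1}, choosing $\epsilon'<\epsilon$ so that $(1+\epsilon)\log^+|z|$ beats $(1+\epsilon')\log(1+|z|^2)/2$ plus a constant. On the compact set $K$ the quantity $|f|^2e^{-2v}$ could fail to be integrable only at the finitely many points $a$ with $\nu(v,a)\ge 1$; there one uses Corollary~\ref{cor:sh} together with the fact that \eqref{eq:sko_one:2} forces $\nu(u,a)\ge[\nu(v,a)]$, and that $f$, being analytic and nonzero, has a finite vanishing order at $a$ that one can factor out — but actually the cleaner route is to build the Lelong-number requirement directly into the weight.

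So the cleaner version of the plan: set $\psi:=\max\{u,\; v-C'\}$ on a neighborhood of $K$ glued (by a standard regularized-maximum / gluing construction for subharmonic functions, using \eqref{eq:sko_one:1} to ensure the pieces agree near $\partial K$) to $u+(1+\epsilon)\log^+|z|$-type data on $G\setminus K$; more precisely choose $\psi\in\SH(G)$ with $\psi=u$ near each singular point $a\in K$ where it matters and with $\psi\le v+\text{const}-(1+\epsilon')\log(1+|z|^2)$ globally. Arranging $\nu(\psi,a)=\nu(u,a)\ge[\nu(v,a)]>\nu(v,a)-1$ at every $a$, Corollary~\ref{cor:sh}(a) gives that $e^{-2\psi}$ is locally integrable at each such $a$, hence (after the density argument above for the remaining, harmless points) $e^{-2\psi}$ is integrable near some $z_0$. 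Apply Theorem~\ref{thm:sko2} to $\psi$ to get $f$, and the global bound $\psi\le v-(1+\epsilon')\log(1+|z|^2)+\text{const}$ converts \eqref{eq:thm:sko2} into $\int_G|f|^2e^{-2v}\,d\lambda^2<\infty$.

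The main obstacle I expect is the gluing step: constructing a single global $\psi\in\SH(G)$ that simultaneously (a) retains the Lelong numbers of $u$ at the finitely many bad points of $v$ inside $K$, (b) stays below $v+\text{const}-(1+\epsilon')\log(1+|z|^2)$ everywhere, and (c) is honestly subharmonic across the interface $\partial K$. The hypotheses \eqref{eq:sko_one:1}--\eqref{eq:sko_one:2} are exactly what make this possible, but matching the $\log^+|z|$ factor in \eqref{eq:sko_one:1} with the $(1+|z|^2)^{1+\epsilon}$ denominator of Skoda's theorem, and handling the case $A=+\infty$ or unbounded $G$, requires care with the regularized maximum near $\partial K$ and with the choice of $\epsilon'<\epsilon$. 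Everything else — the density arguments from Proposition~\ref{thm:siu}, the integrability criteria from Corollary~\ref{cor:sh}, and the final absorption estimate — is routine.
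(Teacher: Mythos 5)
Your first route---apply Theorem~\ref{thm:sko2} directly to $u$, absorb the weight on $G\setminus K$ via \eqref{eq:sko_one:1}, and handle the finitely many bad points in $K$ through the vanishing order of $f$ and Corollary~\ref{cor:sh}---is exactly the paper's proof, and you should have carried it out instead of discarding it. The step you stopped short of is the whole point: local integrability of $|f|^2e^{-2u}$ near $a\in K$ (which Skoda's estimate gives for free) forces the \emph{integer} vanishing order $m$ of $f$ at $a$ to satisfy $m>\nu(u,a)-1$, hence $m\ge[\nu(v,a)]$ by \eqref{eq:sko_one:2}, hence $m>\nu(v,a)-1$, so $|f|^2e^{-2v}$ is integrable near $a$ by Corollary~\ref{cor:sh}; compactness of $K$ finishes. (Your preliminary point about needing $e^{-2u}$ integrable somewhere to invoke Skoda is fine and handled as you say, via Proposition~\ref{thm:siu} and Proposition~\ref{thm:sko}.)

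The ``cleaner route'' you actually advocate has a genuine gap. You require a global pointwise bound $\psi\le v+\mathrm{const}-(1+\epsilon')\log(1+|z|^2)$ while keeping $\psi=u$ (or $\psi=\max\{u,\,v-C'\}$) near each singular point $a$ of $v$ in $K$. But \eqref{eq:sko_one:2} dominates only the integer part $[\nu(v,a)]$, so $\nu(u,a)$ may be strictly smaller than $\nu(v,a)$: locally $v(z)=\tfrac32\log|z-a|$ and $u(z)=\log|z-a|$ satisfy \eqref{eq:sko_one:2}, yet $u-v=-\tfrac12\log|z-a|\to+\infty$ at $a$, so no constant gives $u\le v+C'$ near $a$ and requirement (b) of your gluing cannot be met; Lelong-number inequalities never yield such pointwise comparisons. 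Relatedly, your claim that Corollary~\ref{cor:sh} gives local integrability of $e^{-2\psi}$ at each such $a$ is false whenever $[\nu(v,a)]\ge1$ (that would need $\nu(\psi,a)<1$), though it is also unnecessary, since Skoda requires integrability at one point only. The reason hypothesis \eqref{eq:sko_one:2} is stated with the integer part is precisely that the passage from $[\nu(v,a)]$ to $\nu(v,a)$ is effected by the integrality of the vanishing order of the holomorphic function $f$, not by any pointwise inequality between the weights---which is the step of your first, abandoned route.
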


Here, $[x]$ denotes the largest integer not greater than $x\in\rr$.

\begin{proof}
We apply Theorem~\ref{thm:sko2} to the function $u$ and get the function $f\in\oo(G)$ 
not identically equal to $0$, which  satisfies (\ref{eq:thm:sko2}).
Due to condition (\ref{eq:sko_one:1}), we get that
\[
\int_{G\setminus K} |f|^2 e^{-2v} \, d\lambda^2 
  \le \widetilde C \int_{G\setminus K} 
    \frac {|f(z)|^2}{(1+|z|^2)^{1+\epsilon}} e^{-2u(z)} \, d\lambda^2(z) < +\infty,
\]
for some constant $\widetilde C$.

The function $|f|^2e^{-2u}$ is integrable in a neighborhood of any point $z_0\in K$, and
in virtue of (\ref{eq:sko_one:2}) and Corollary~\ref{cor:sh}
the function $|f|^2e^{-2v}$ is integrable in some, possibly smaller, neighborhood of $z_0$.
Using the compactness argument we get the integrability of $|f|^2e^{-2v}$ on $K$,
which finishes the proof.
\end{proof}

\subsection{Ohsawa--Takegoshi extension theorem}

We shall use the Ohsawa--Take\-go\-shi theorem to prove that
the space $L_h^2(D_\phi)$ (for $D_\phi \subset G\times\cc^N$) 
is infinite dimensional provided that
the base domain $G\subset \cc$ has nonpolar complement.
We quote the theorem in simple setting with zero weights but with $D$ possibly unbounded.

\begin{theorem}(cf.~\cite{bib:ohs}, \cite{bib:din})\label{thm:ohs}
Let $(z_0,w_0)\in D\subset G\times \cc^N$ where $G\subset \cc$ is a domain with nonpolar complement.
Then there exists a constant $C>0$ depending only on the domain $G$ such that
for any $f \in L_h^2(D\cap(\{z_0\}\times\cc^N))$ we can find 
$F\in L_h^2(D)$ with $F(z_0,\cdot)=f$ and
\[
\int_D |F|^2\, d\lambda^{2N+2} \le C \int_{D\cap(\{z_0\}\times \cc^N)} |f|^2\, d\lambda^{2N}.
\]
\end{theorem}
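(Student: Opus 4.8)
The plan is to derive the statement from the Ohsawa--Takegoshi extension theorem, using the Green function of the base $G$ to supply an auxiliary weight that simultaneously removes the boundedness hypothesis on $D$ and makes the constant depend only on $G$. Regard $D$ as a pseudoconvex domain in $\cc^{N+1}$ with first coordinate $z$, put $S:=D\cap\{z=z_0\}$, and use the Ohsawa--Takegoshi theorem in the form in which the extension constant is controlled by a negative plurisubharmonic weight with a prescribed logarithmic pole along $S$: if $\Phi$ is plurisubharmonic on $D$ with $\Phi\le0$ and $\Phi(z,w)-2\log|z-z_0|$ bounded near $S$, then every $f\in L^2_h(S)$ extends to some $F\in\oo(D)$ with $F|_S=f$ and
\[
\int_D|F|^2\,d\lambda^{2N+2}\ \le\ C_0\,\exp\Big(\sup_{\text{near }S}\bigl(2\log|z-z_0|-\Phi\bigr)\Big)\int_S|f|^2\,d\lambda^{2N},
\]
with an absolute constant $C_0$. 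For a \emph{bounded} $D$ one takes $\Phi=\log\bigl(|z-z_0|^2/R^2\bigr)$ with $R:=\sup\{|z|:(z,w)\in D\}$, which is exactly the classical ($D$-dependent) estimate; the present point is to choose $\Phi$ independently of $D$.

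This is where the hypothesis on $G$ enters. Since $\cc\setminus G$ is nonpolar, the Green function $g:=g_G(\cdot,z_0)$ of $G$ with pole at $z_0$ exists; it is positive, harmonic on $G\setminus\{z_0\}$, and $g(z)=-\log|z-z_0|+h(z)$ near $z_0$ with $h$ harmonic. Hence $\Phi(z,w):=-2g(z)$ is plurisubharmonic on $G\times\cc^N$, is $\le0$ on \emph{all} of $G\times\cc^N$ (so that $D$ need not be bounded), satisfies $2\log|z-z_0|-\Phi=2h$, which is bounded near $S$, and is built from $G$ and $z_0$ alone. Inserting it into the displayed estimate gives $\int_D|F|^2\le C_0\,e^{2\sup_{\text{near }z_0}h}\int_S|f|^2$. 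To justify the displayed estimate itself I would run the standard twisted-$\bar\partial$ proof of Ohsawa--Takegoshi. First reduce to $D$ bounded, hence Stein, by exhausting it by bounded pseudoconvex $D_j\uparrow D$ with $(z_0,w_0)\in D_1$: the slices $D_j\cap S$ exhaust $S$, restriction does not increase the $L^2$ norm, and a normal-families / weak-$L^2$ limit of the extensions $F_j$ returns the extension over the original $D$. Then extend $f$ to some $\hat f\in\oo(D)$ with no estimate (Cartan's Theorem~B, since $S$ is a closed submanifold of the Stein domain $D$), truncate it to a smooth extension $\widetilde F:=\chi\,\hat f$ with $\chi$ equal to $1$ near $S$ and supported close to it (so that $\bar\partial\widetilde F=\bar\partial\chi\cdot\hat f$ is supported away from $S$), and solve $\bar\partial u=\bar\partial\widetilde F$ by the Donnelly--Fefferman-type weighted estimate built around $\Phi$. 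In that step the pole of $\Phi$ along $S$ forces the solution to agree with $f$ on $S$, the resulting $L^2$ norm is taken against a weight comparable to $1$, and, letting $\operatorname{supp}\bar\partial\chi$ shrink onto $S$, the standard residue computation converts the estimate into the displayed inequality; then $F:=\widetilde F-u$ is the required extension.

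I expect the real obstacle to be this last point: carrying out the twisted $\bar\partial$-estimate with $\Phi=-2g_G(\cdot,z_0)$ in place of the classical $\log\bigl(|z-z_0|^2/R^2\bigr)$, and extracting from it a constant controlled by $G$ alone --- which forces one to invoke the sharp form of the Ohsawa--Takegoshi inequality (Berndtsson, B\l{}ocki, Guan--Zhou) together with elementary sub-mean-value and comparison properties of the Green function of $G$ (to control $\sup h$ near $z_0$), rather than any ready-made version. The reduction to bounded $D$ is by contrast routine, since a locally $L^2$-bounded sequence of holomorphic functions is normal and weak-$L^2$ limits stay holomorphic. As in the Ohsawa--Takegoshi theorem, pseudoconvexity of $D$ is used throughout; this holds in all applications made in the paper.
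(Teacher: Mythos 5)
The paper does not prove Theorem~\ref{thm:ohs} at all: it is quoted from the literature, and the unbounded case with a constant controlled by the base domain is precisely the content of the cited paper of Dinew \cite{bib:din}. Your sketch reconstructs essentially that argument, and its central idea is the right one: the nonpolarity of $\cc\setminus G$ enters exactly to produce the Green function $g_G(\cdot,z_0)$, and $\Phi=-2g_G(\cdot,z_0)$ is a nonpositive plurisubharmonic weight on all of $G\times\cc^N$ with the required logarithmic pole along $\{z=z_0\}$, so that the weighted Ohsawa--Takegoshi estimate applies uniformly over a bounded pseudoconvex exhaustion of $D$ and the constant survives the weak-$L^2$ limit; this is what replaces the weight $\log(|z-z_0|^2/R^2)$ available only for bounded $D$. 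Two caveats. First, the constant you obtain is $C_0\exp(2\sup h)$ with $h=g_G(\cdot,z_0)+\log|\cdot-z_0|$, i.e.\ it depends on $z_0$ through the Robin constant of $G$ at $z_0$; this is harmless for Corollary~\ref{cor:nonpolar}, where $z_0$ is a single fixed point, but it is formally weaker than the statement's ``depending only on the domain $G$''. Second, as you correctly flag, pseudoconvexity of $D$ is tacitly assumed in the statement and is genuinely needed for the $\bar\partial$ step; it holds for the Hartogs domains to which the theorem is applied. The remaining work --- carrying out the twisted $\bar\partial$ estimate with this weight --- is deferred in your sketch, but that is the same division of labour as in the paper, which defers the entire proof to \cite{bib:ohs} and \cite{bib:din}.
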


\begin{corollary}\label{cor:nonpolar}
If $G \subset \cc$ has nonpolar complement and $\phi\in \SH(G)$, $\phi\not\equiv -\infty$,
then $\dim L_h^2(D_\phi)= \infty$.
\end{corollary}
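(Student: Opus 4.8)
The plan is to reduce the claim to producing, for each multi-index $n\in\zz_+^N$, a nonzero holomorphic function $f_n$ on $G$ with $f_n(z)w^n\in L^2_h(D_\phi)$; by Lemma~\ref{lem:hartogs}, the functions $f_n(z)w^n$ for distinct $n$ are pairwise orthogonal elements of $L^2_h(D_\phi)$, hence if infinitely many are nonzero the Bergman space is infinite dimensional. Computing the norm of such a monomial-type function over the Hartogs domain, $\int_{D_\phi}|f_n(z)|^2|w^n|^2\,d\lambda^{2(M+N)}$, and integrating out the $w$ variables over the polydisc of radius $e^{-\phi(z)}$ shows this integral is a fixed constant (depending only on $N$ and $n$) times $\int_G |f_n(z)|^2 e^{-2(N+|n|)\phi(z)}\,d\lambda^{2}(z)$. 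So it suffices to find, for every $k\ge 0$, a nonzero $f\in\oo(G)$ with $\int_G |f|^2 e^{-2k\phi}\,d\lambda^2<\infty$.

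Fix $z_0\in G$. The key point is that since $G$ has nonpolar complement, the Green function or, more directly here, the following elementary fact holds: a subharmonic function $k\phi$ on $G$, $k\phi\not\equiv-\infty$, always has a point $z_0$ where it is finite and in fact where $\nu(k\phi,z_0)=0$, indeed where $k\phi$ is locally bounded above and below near $z_0$ (the set where $k\phi=-\infty$ is polar, and on the complement one can choose $z_0$ with $\nu(k\phi,z_0)<1/k \cdot$ anything, but more simply $\nu(k\phi,z_0)=0$ outside the discrete singular set from Proposition~\ref{thm:siu}). Then $e^{-2k\phi}$ is integrable in a neighborhood of $z_0$ by Proposition~\ref{thm:sko} (or Corollary~\ref{cor:sh} with $\nu<1$). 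Now I would invoke the Ohsawa--Takegoshi theorem, Theorem~\ref{thm:ohs}, applied to the domain $D_{k\phi}$ (or equivalently use the one-variable $L^2$ extension / Skoda machinery): taking any nonzero $f\in L^2_h(D_{k\phi}\cap(\{z_0\}\times\cc))$ — which is nonempty since the fiber is a nonempty disc — we extend to $F\in L^2_h(D_{k\phi})$, and reading off the zeroth Hartogs coefficient of $F$ gives a nonzero $g\in\oo(G)$ with $g(z)\in L^2_h(D_{k\phi})$, i.e. $\int_G|g|^2 e^{-2k\phi}\,d\lambda^2<\infty$.

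Carrying this out for $k=N+|n|$ and every $n\in\zz_+^N$ yields infinitely many linearly independent elements of $L^2_h(D_\phi)$, completing the proof. Actually, since $\phi\in\SH(G)$ with $\phi\not\equiv-\infty$ one can be even more efficient: find a single $z_0$ with $\nu(\phi,z_0)=0$, and a single nonzero $f_0\in L^2_h(D_\phi\cap(\{z_0\}\times\cc^N))$ which one may take to contain all the monomials $w^n$ near $z_0$; applying Ohsawa--Takegoshi to $D_\phi$ itself and extracting Hartogs coefficients produces the desired family in one stroke.

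The main obstacle is justifying the existence of a suitable base point $z_0$ and making the Ohsawa--Takegoshi hypothesis ``$(z_0,w_0)\in D_\phi$'' compatible with ``$e^{-2(N+|n|)\phi}$ locally integrable near $z_0$'': one needs $z_0$ to lie outside the polar singular locus of $\phi$ \emph{and} to have Lelong number small enough (here, $\nu(\phi,z_0)<1/(N+|n|)$, which for all $n$ simultaneously forces $\nu(\phi,z_0)=0$) — this is exactly where Proposition~\ref{thm:siu} (discreteness of the set of positive Lelong numbers) and Proposition~\ref{thm:sko}/Corollary~\ref{cor:sh} are used. Once $z_0$ is chosen, the extension step is a direct application of Theorem~\ref{thm:ohs} to the Hartogs domain $D_\phi$, and the fiber $D_\phi\cap(\{z_0\}\times\cc^N)$ is a genuine (nonempty) polydisc, so it carries every monomial in $L^2$.
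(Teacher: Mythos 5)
Your proposal is correct and, in essence, the same as the paper's proof: pick $z_0$ with $\phi(z_0)>-\infty$, observe that the fiber $D_\phi\cap(\{z_0\}\times\cc^N)$ is a bounded polydisc and hence has infinite-dimensional Bergman space, and extend by Theorem~\ref{thm:ohs} --- your final ``one stroke'' paragraph is exactly this argument. The detour through Lelong numbers, local integrability of $e^{-2k\phi}$, and the auxiliary domains $D_{k\phi}$ is superfluous (and the claim that $\phi$ is locally bounded below near some point is not needed and not true in general), since Theorem~\ref{thm:ohs} as quoted carries zero weights and requires only that $\phi(z_0)>-\infty$, which makes the fiber bounded.
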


\begin{proof}
It suffices to take $z_0\in G$ such that $\phi(z_0)>-\infty$.
Then $D_\phi \cap \{z_0\}\times \cc^N$ has infinite dimensional Bergman space for 
it is bounded, and hence, we get the conclusion.
\end{proof}

%\begin{remark}
We remark that there is another consequence of Theorem~\ref{thm:ohs} 
in \cite{bib:din} determining the dimension
of the Bergman space for some pseudoconvex domains in $\cc^2$.
In particular, it applies to a Hartogs domain $D_\phi \subset G \times\cc$,
in the case when either $\cc\setminus G$ is nonpolar or
$\phi$ is bounded from below on $G$.
%\end{remark}

\section{Hartogs domains with one dimensional bases}

We use the following notation: $[x]$ is the integral part of a number $x\in \rr$
and $\{x\}:= x - [x]$ is the fractional part of $x$.

Let $G\subset \cc$ be a domain and $\phi\in\SH(G)$.
In virtue of Proposition~\ref{thm:siu}, the set $\{a\in G:\, \nu(\phi,a)>0 \}$ is at most
countable.
For that reason we may decompose $\Delta \phi$ as
\begin{equation}\label{eq:decomp}
\Delta \phi = \sum_{j\ge 1} \alpha_j \delta_{a_j} + \mu,
\end{equation}
where $\delta_{a_j}$ are the Dirac measures at some (distinct) points $a_j\in G$,
$\alpha_j = \nu(\phi,a_j)>0$, and $\mu$ is a nonnegative measure equal zero 
on countable sets.

For such a decomposition consider the following condition on weights $\alpha_j$:
\begin{equation}\label{eq:cond}
\begin{split}
&\exists\ j_1\neq j_2:\
\{\alpha_{j_1}\}, \{\alpha_{j_2}\}, \{\alpha_{j_1}+ \alpha_{j_2}\} >0\\
\text{or }
&\exists\ j_1 < j_2 <j_3:\  
  \{\alpha_{j_1}\}, \{\alpha_{j_2}\}, \{\alpha_{j_3}\}, \{\alpha_{j_1} + \alpha_{j_2} + \alpha_{j_3}\} >0.
\end{split}
\end{equation}
In other words,
condition~(\ref{eq:cond}) says that there exist at least two non-integral 
weights such that their sum is not an integer.
In fact, the only case when we need more than two weights in the statement of this condition 
is $\alpha_{j_1} = \alpha_{j_2} = \alpha_{j_3} = \frac 12$.

\begin{theorem}\label{thm:main}
Let $\phi\in\SH(\cc)$ and suppose that we have decomposition (\ref{eq:decomp})
for $\phi$.
If $\mu\not\equiv 0$ or condition~(\ref{eq:cond}) is satisfied, then
$\dim L^2_h(D_\phi) = \infty$.
Otherwise, $L^2_h(D_\phi) = \{0\}$.
\end{theorem}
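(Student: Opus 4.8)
The plan is to prove the two halves of Theorem~\ref{thm:main} separately, using the $L^2$-tools of Section~3 for the ``infinite dimensional'' direction and the elementary integrability obstructions of Corollary~\ref{cor:sh} for the ``trivial'' direction. In both cases, by Lemma~\ref{lem:hartogs}, it suffices to analyze functions of the form $f(z)w^n$, $f\in\oo(\cc)$, $n\in\zz_+^N$, and to observe that
\[
\int_{D_\phi}|f(z)w^n|^2\,d\lambda^{2(N+1)}(z,w) = c_n\int_\cc |f(z)|^2 e^{-2(N+|n|)\phi(z)}\,d\lambda^2(z)
\]
for a positive constant $c_n$ (from integrating $|w^n|^2$ over the polydisc fiber). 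Thus everything reduces to the one-variable question: for which $m\in\nn$ does there exist $f\in\oo(\cc)$, $f\not\equiv 0$, with $\int_\cc|f|^2 e^{-2m\phi}<\infty$? Writing $\phi_m:=m\phi$, we have $\Delta\phi_m = \sum_j m\alpha_j\delta_{a_j} + m\mu$, so $\nu(\phi_m,a_j)=m\alpha_j$, and the $m\mu$ part is again a measure vanishing on countable sets.

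For the \textbf{infinite dimensional} direction I would apply Proposition~\ref{prop:sko_one} with $v=\phi_m$ (for every large enough $m$, which gives infinitely many $n$) and an auxiliary $u\in\SH(\cc)$ built to dominate $v$ at infinity with a $(1+\epsilon)\log^+|z|$ slack while carrying enough Lelong mass at the $a_j$. The natural candidate is $u := \sum_j \beta_j\log|z-a_j| + (\text{correction})$, where $\beta_j$ is chosen so that $\beta_j \ge [m\alpha_j]$ --- i.e.\ we round the Lelong numbers down to integers, which loses strictly less than $1$ at each point and so, by Corollary~\ref{cor:sh}(a), preserves local integrability of $|f|^2e^{-2v}$ wherever $|f|^2e^{-2u}$ is locally integrable. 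The case $\mu\not\equiv 0$ is the easy one: then $\phi_m$ is \emph{not} harmonic, its Riesz measure has a diffuse part, and one can localize --- restrict to a disc where $\mu$ has positive mass, or simply invoke Corollary~\ref{prop:bdd}/Corollary~\ref{cor:nonpolar} after noting that the presence of $\mu$ forces, via a subharmonic comparison, either boundedness phenomena or nonpolarity of a sublevel set; more directly, $e^{-2m\phi}$ being locally non-integrable only at the countably many $a_j$ means one runs Skoda (Theorem~\ref{thm:sko2}) at a generic base point and removes the finitely many bad points in a large disc by multiplying $f$ by a polynomial vanishing there to high order, then controls growth at $\infty$ using that $\mu\not\equiv0$ makes $m\phi$ grow at least like $C\log|z|$ with $C\to\infty$ as $m\to\infty$. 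When $\mu\equiv0$ but condition~(\ref{eq:cond}) holds, the point is a counting argument: choosing the right finite sub-collection $a_{j_1},\dots$ of points with non-integral weights whose weighted sum is non-integral, one can, for infinitely many $m$, arrange a polynomial $f$ vanishing to orders just below $m\alpha_{j_i}$ at the $a_{j_i}$ so that $|f|^2e^{-2m\phi}$ is locally integrable everywhere \emph{and} --- crucially --- $\deg f$ can be taken large enough to absorb the behavior at $\infty$; the slack $\{m\alpha_{j_1}+\cdots\}>0$ is exactly what lets the degree exceed $\sum[m\alpha_{j_i}]$ while each individual vanishing order stays $\ge [m\alpha_{j_i}]$.

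For the \textbf{triviality} direction, suppose $\mu\equiv0$ and (\ref{eq:cond}) fails; I must show every $f\in\oo(\cc)$ with $\int_\cc|f|^2 e^{-2m\phi}<\infty$ is $\equiv 0$, for every $m\ge N+1$. Here $\phi = \sum_j\alpha_j\log|z-a_j| + h$ with $h$ harmonic on $\cc$ (since $\Delta\phi$ is a discrete measure and $\phi\in\SH(\cc)$); harmonicity of $h$ plus the global growth hypothesis forces $h$ to be of the form $\re(\text{entire})$ with at most logarithmic-type growth, so in effect $e^{-2m\phi}\sim |\text{Blaschke-type product}|^{-2m}\cdot(\text{mild factor})$. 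Then $f$ must vanish at each $a_j$ to order $\ge [m\alpha_j]$ (Corollary~\ref{cor:sh}(a)), so $f = g\cdot\prod_j(z-a_j)^{[m\alpha_j]}$ with $g$ entire; pushing the integrability into $g$, the leftover singular exponent at $a_j$ is $2m\{m\alpha_j\}$... wait, it is $2\{m\alpha_j\}$ after factoring, and local integrability near $a_j$ now costs nothing, while at $\infty$ Corollary~\ref{cor:sh}(b) forces $\deg(f) + (\text{growth exponent of }e^{-2m\phi}\text{ at }\infty) < $ something, i.e.\ $\deg f$ plus $\sum_j m\alpha_j$ minus the polynomial part of $h$ must be strictly less than a quantity determined by $\sum_j[m\alpha_j]$; when (\ref{eq:cond}) fails, $\sum_j\{m\alpha_j\}$ is forced (this is the content of the ``in fact\ldots $\alpha_{j_i}=\tfrac12$'' remark, handled by a parity/mod-$1$ bookkeeping over at most two non-integral weights) to be an integer equal to the number of non-integral weights or smaller in a way that leaves \emph{no room} for a nonzero $g$: the degree constraint reads $\deg g < $ (a non-positive number), forcing $g\equiv 0$. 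The delicate point --- and the step I expect to be the main obstacle --- is this final arithmetic: precisely matching the local vanishing-order gains $\sum_j[m\alpha_j]$ against the global degree budget $B-1$ from Corollary~\ref{cor:sh}(b), and verifying that \emph{failure} of (\ref{eq:cond}) is exactly the condition under which these never leave a gap for $m\ge N+1$, including the exceptional symmetric case of three weights equal to $\tfrac12$. One must also dispose of the sub-case where $\phi$ grows faster than any multiple of $\log|z|$ (then Corollary~\ref{cor:sh}(b)'s ``$B=+\infty$'' branch applies and the $w^n$-slices are automatically square-integrable, so the space is infinite dimensional --- but this reintroduces $\mu\not\equiv0$ or non-finiteness of $\Delta\phi$, so it is consistent with the dichotomy) and the sub-case $h$ non-constant harmonic with linear growth, which contributes a genuine exponential weight $e^{-2m\,\re(az)}$; there $e^{-2m\phi}$ is locally integrable everywhere but has no polynomial decay at $\infty$ in the direction $\re(az)\to-\infty$, which kills square-integrability of any $f\not\equiv0$ by a Phragmén--Lindelöf / growth estimate, so again the space is trivial in that regime, matching the theorem since such $\phi$ has $\mu\equiv0$ and, one checks, can only fail (\ref{eq:cond}) --- consistent.
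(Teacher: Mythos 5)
Your overall framework (reduce to weights $e^{-2(N+|n|)\phi}$ via Lemma~\ref{lem:hartogs}, use Skoda/Proposition~\ref{prop:sko_one} for the infinite--dimensional half, use Corollary~\ref{cor:sh} for the obstruction) matches the paper, but several load-bearing steps in your sketch are wrong or missing. The most serious problem is the triviality direction: your argument is a polynomial degree count against a ``degree budget'' from Corollary~\ref{cor:sh}(b), which presupposes that $\phi$ has logarithmic growth at infinity. The theorem makes no such assumption: with $\mu\equiv 0$ the function $\phi$ is a sum of log-atoms plus an \emph{arbitrary} entire harmonic function $h$, e.g.\ $\phi=\re(z^2)$, which has $\Delta\phi\equiv 0$, fails (\ref{eq:cond}), and grows faster than every multiple of $\log|z|$. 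Your two escape routes for this regime both fail: superlogarithmic growth does \emph{not} ``reintroduce $\mu\not\equiv 0$'' (the example above is harmonic), and the Phragm\'en--Lindel\"of remark only addresses linear $h$ and is not an argument. The paper's device, which you are missing, is to write $(N+|n|)\phi=\log|g|+\alpha_1'\log|z-a_1|+\alpha_2'\log|z-a_2|$ with $g$ entire carrying all integer-order zeros \emph{and} the harmonic part (via a conjugate), where $\alpha_j'=\{(N+|n|)\alpha_j\}$ and the failure of (\ref{eq:cond}) gives $\alpha_1'+\alpha_2'\le 1$; dividing $f$ by $g$ kills the growth of $h$ entirely and reduces everything to showing an entire function with $\int_{|z|>R}|f/g|^2|z|^{-2(\alpha_1'+\alpha_2')}\,d\lambda^2<\infty$ vanishes, which is a Taylor-coefficient/Liouville argument with no growth hypothesis on $\phi$. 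Relatedly, the ``arithmetic'' you defer is a genuine lemma (Lemma~\ref{lem:number}): one needs $\sum_j\{k\alpha_j\}>1$ for infinitely many $k$ when (\ref{eq:cond}) holds (Kronecker's theorem, several cases), and $\le 1$ for all $k$ when it fails; your slack ``$\{k\alpha_{j_1}+\cdots\}>0$'' is strictly weaker than what Proposition~\ref{prop:sko_one} needs, since condition (\ref{eq:sko_one:1}) requires a sum of fractional parts exceeding $1+\epsilon$.

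Two further gaps in the infinite-dimensional half. First, your candidate $u=\sum_j\beta_j\log|z-a_j|+(\text{correction})$ discards the harmonic and diffuse parts of $\phi$, so (\ref{eq:sko_one:1}) fails whenever $\phi\to-\infty$ along some directions; the correct choice is $u=(N+|n|)\phi-\sum_{j\in J}\{(N+|n|)\alpha_j\}\log|z-z_j|$ for a finite set $J$ of two or three points, i.e.\ you subtract only fractional parts at finitely many points and keep all of $(N+|n|)\phi$. Second, the case $\mu\not\equiv 0$, which you call easy, is not: Corollary~\ref{cor:nonpolar} and Corollary~\ref{prop:bdd} do not apply (here $G=\cc$ is unbounded with empty, hence polar, complement), and your claim that $\mu\not\equiv0$ makes $m\phi$ grow like $C\log|z|$ is only true for circular means, not pointwise, so it gives no control of $\int_{|z|>R}|f|^2e^{-2m\phi}$. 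The paper handles this case in Proposition~\ref{prop:notharmonic} by a harmonic modification inside a disc where $\nu(\phi,\cdot)\equiv0$ but $\Delta\phi\not\equiv0$, replacing the subharmonic part by a function with an artificial logarithmic pole, thereby manufacturing a point of positive Lelong number to feed into Proposition~\ref{prop:sko_one}; some construction of this kind (or another source of the $(1+\epsilon)\log^+|z|$ slack) is indispensable and absent from your proposal.
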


\begin{proof}
To prove the first part of the theorem it suffices to collect the results from
Proposition~\ref{prop:num} and
Proposition~\ref{prop:notharmonic} below.

Suppose that $\mu\equiv 0$ and condition~(\ref{eq:cond}) does not hold,
i.e.
$\Delta\phi = \sum_{j\ge 1} \alpha_j\delta_{a_j}$ and at most two
of numbers $\alpha_j$, say $\alpha_1$, $\alpha_2$, are not natural.
Take any $F\in L^2_h(D_\phi)$. Without loss of generality we may assume that
$F(z,w) = f(z)w^n$ for some $n\in\zz_+^N$.
Thus,
\begin{equation}\label{eq:int}
\int_\cc |f|^2e^{-2\psi}\, d\lambda^2 <+\infty,
\end{equation}
where $\psi=(N+|n|)\phi$. 
There exists $g\in\oo(\cc)$ such that
\[
\psi(z) = \log|g(z)| + \alpha_1^\prime \log|z-a_1| + \alpha_2^\prime \log|z-a_2|, \quad z \in G,
\]
with $\alpha_j^\prime = \{(N+|n|)\alpha_j\}$, $j=1,2$,
and $g$ having zeroes at $a_j$ of order $[(N+|n|)\alpha_j]$ for $j\ge 1$.

Therefore, $f$ must have zeroes at these points of at least the same order 
(cf.~Corollary~\ref{cor:sh}) and $h:=\frac fg$ extends to an entire holomorphic function.
It follows from (\ref{eq:int}) that
\[
\int_{|z|>R} |h(z)|^2|z|^{-2(\alpha_1^\prime+\alpha_2^\prime)}\, d\lambda^2(z) < +\infty.
\]
Expanding $h$ in a Taylor series and using Corollary~\ref{cor:sh} 
with $u=(\alpha_1^\prime+\alpha_2^\prime)\log|z|$
(recall that $\alpha_1^\prime+\alpha_2^\prime \le 1$) we obtain
that $\lim_{|z|\to\infty}|h(z)|=0$.
Thus, both $h$ and $f$ must be identically equal to zero, and in consequence, $L_h^2(D_\phi)=\{0\}$.
\end{proof}

\begin{proposition}\label{prop:num}
Let a domain $G\subset \cc$ and a function $\phi\in\SH(G)$ be such that
there exist $z_j \in G$ with $\alpha_j:=\nu(\phi,z_j)$, $j=1,2,3$
satisfying
\[
\{\alpha_1+\alpha_2+\alpha_3\},\{\alpha_1\},\{\alpha_2\}>0, \alpha_3\ge 0.
\]
Then $\dim L^2_h(D_\phi)=\infty$.
\end{proposition}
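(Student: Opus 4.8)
The plan is to apply Proposition~\ref{prop:sko_one} to produce, for infinitely many $m\in\nn$, a function $f_m\in\oo(G)$ with $f_m\not\equiv0$ and $\int_G|f_m|^2e^{-2m\phi}\,d\lambda^2<+\infty$. Writing $n:=(m-N,0,\dots,0)$, a dilation of $w$ gives $\int_{\{\|w\|<\rho\}}|w^n|^2\,d\lambda^{2N}(w)=c_{m,N}\,\rho^{2m}$, so with $\rho=e^{-\phi(z)}$ and Fubini one gets $f_m(z)w^n\in L^2_h(D_\phi)$ as soon as $m\ge N$; by Lemma~\ref{lem:hartogs} these functions, having pairwise distinct $w$-degrees, are linearly independent, and producing them for infinitely many $m$ yields $\dim L^2_h(D_\phi)=\infty$. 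Here $a_j:=z_j$ (these points being pairwise distinct, as they are in the applications, cf.\ (\ref{eq:cond})) and $\alpha_j$ is the mass of $\tfrac1{2\pi}\Delta\phi$ at $a_j$.

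For a fixed $m$ I would take $v:=m\phi$ and
\[
u:=m\phi-\sum_{j=1}^3\{m\alpha_j\}\log|z-a_j| .
\]
Writing $u=\bigl(m\phi-\sum_{j=1}^3 m\alpha_j\log|z-a_j|\bigr)+\sum_{j=1}^3[m\alpha_j]\log|z-a_j|$, the first bracket has Riesz measure $m\tfrac1{2\pi}\Delta\phi-\sum_j m\alpha_j\delta_{a_j}\ge0$ (only the atoms at $a_1,a_2,a_3$, of masses $m\alpha_j$, are removed) and the second summand is subharmonic since $[m\alpha_j]\ge0$, so $u\in\SH(G)$ and $u\not\equiv-\infty$. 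Moreover $\nu(u,a_j)=m\alpha_j-\{m\alpha_j\}=[m\alpha_j]=[\nu(v,a_j)]$ for $j=1,2,3$, while $\nu(u,z)=m\nu(\phi,z)\ge[\nu(v,z)]$ at every other point, so (\ref{eq:sko_one:2}) holds for \emph{any} compact $K$. Taking $K$ to be a union of pairwise disjoint closed discs $\overline{\ds(a_j,r_j)}\subset G$ with $0<r_j<1$, the inequality (\ref{eq:sko_one:1}) reduces, after cancelling $m\phi$, to
\[
(1+\epsilon)\log^+|z|-\sum_{j=1}^3\{m\alpha_j\}\log|z-a_j|\le C ;
\]
this is clear for $|z|$ bounded (where $\log|z-a_j|\ge\log r_j$ off $K$), and for $|z|$ large the left side is at most $(1+\epsilon-\Theta)\log|z|+\Theta c_0$ with $\Theta:=\sum_{j=1}^3\{m\alpha_j\}$, since $\log|z-a_j|\ge\log|z|-c_0$ there. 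Hence, \emph{provided $\Theta>1$}, the choice $\epsilon:=(\Theta-1)/2>0$ makes the left side bounded above, and Proposition~\ref{prop:sko_one} delivers the desired $f_m$.

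Everything thus reduces to the number-theoretic assertion that $\{m\alpha_1\}+\{m\alpha_2\}+\{m\alpha_3\}>1$ for infinitely many $m\in\nn$; this is the heart of the argument, and I would record it as a separate lemma. The tail of the sequence $\bigl(\{m\alpha_1\},\{m\alpha_2\},\{m\alpha_3\}\bigr)_{m\ge1}$ is dense in the closed subgroup $H$ of the torus $(\rr/\zz)^3$ generated by $(\alpha_1,\alpha_2,\alpha_3)$, and $U:=\{x:\{x_1\}+\{x_2\}+\{x_3\}>1\}$ is a nonempty open subset of the torus, so it suffices that $H$ meet $U$ (then, by density, infinitely many $m$ qualify). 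If it does not, $H$ lies in the closed complement of $U$; analysing the identity component $H_0$ of $H$ (a subtorus) and using that $-x\in H_0$ whenever $x\in H_0$, together with $\{t\}+\{-t\}\in\{0,1\}$, one finds $H_0$ to be trivial, a coordinate axis, or an ``anti-diagonal'' $\{x_i+x_j\equiv0,\ x_k=0\}$ of a coordinate plane; in each nontrivial case the coset of $H_0$ through $(\alpha_1,\alpha_2,\alpha_3)$ can avoid $U$ only if one of $\{\alpha_1\}$, $\{\alpha_2\}$, $\{\alpha_1+\alpha_2+\alpha_3\}$ equals $0$, contrary to hypothesis. If $H_0$ is trivial then $\alpha_1,\alpha_2,\alpha_3$ are rational and the statement reduces to a short residue computation: evaluating $\sum_j\{m\alpha_j\}\le1$ at $m=1$ and at $m\equiv-1$ modulo the common denominator already forces $\{\alpha_1\}=0$, $\{\alpha_2\}=0$, or $\alpha_1+\alpha_2+\alpha_3\in\zz$.

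The \textbf{main difficulty} is precisely this number-theoretic lemma. Granting it, one picks $m$ with $\Theta>1$ and $m\ge N$ (all but finitely many qualifying $m$ satisfy the latter), runs the construction above, and obtains $\dim L^2_h(D_\phi)=\infty$; the remainder is routine manipulation of subharmonic functions together with Proposition~\ref{prop:sko_one}.
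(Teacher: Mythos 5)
Your argument is correct, and its analytic core is the same as the paper's: you feed the same weight $u=m\phi-\sum_{j=1}^3\{m\alpha_j\}\log|z-z_j|$ into Proposition~\ref{prop:sko_one} with $v=m\phi$, verify (\ref{eq:sko_one:1}) and (\ref{eq:sko_one:2}) in the same way, and reduce everything to the existence of infinitely many $m\in\nn$ with $\sum_{j=1}^3\{m\alpha_j\}>1$. Where you genuinely diverge is in that number-theoretic step. The paper isolates it as Lemma~\ref{lem:number} and proves it by an explicit five-case analysis (rational weights, one irrational, $\zz$-independence or $\zz$-dependence of $1,\alpha_1,\alpha_2$) built on the one- and two-dimensional Kronecker theorems. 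You instead use the structure of the closed subgroup $H$ of $(\rr/\zz)^3$ generated by $(\alpha_1,\alpha_2,\alpha_3)$: the tail of $(m\alpha)_{m\ge 1}$ is dense in $H$, the set $U=\{x:\ \{x_1\}+\{x_2\}+\{x_3\}>1\}$ is open (lower semicontinuity of $x\mapsto\{x\}$), and a subtorus avoiding $U$ is forced by the symmetry $\{t\}+\{-t\}\in\{0,1\}$ and a Baire/connectedness argument to be trivial, a coordinate axis, or an anti-diagonal, after which each coset through $\alpha$ contradicts the hypotheses; I checked that this classification and the coset analysis go through. Your route is more structural and unifies the paper's cases, but note that the paper's Lemma~\ref{lem:number} is a genuine equivalence --- it also characterizes exactly when $\sum_j\{k\alpha_j\}\le 1$ for all $k$ --- and that converse is what drives the ``$L^2_h(D_\phi)=\{0\}$'' half of Theorem~\ref{thm:main}, so your one-directional density argument could not simply replace it there. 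Two minor points in your favour: you justify the subharmonicity of $u$ via the Riesz measure, which the paper only asserts, and you make explicit that the points $z_j$ must be pairwise distinct for the construction to make sense.
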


\begin{proof}
By Lemma~\ref{lem:number} there exist infinitely many $k\in \nn$ such 
that $\sum_{j=1}^3 \{k\alpha_j\} > 1$.
Fix a multi-index $n\in\zz$ such that $N+|n|=k$ for some $k\in\nn$ and $\epsilon>0$ so small that
\[
\sum_{j=1}^3 \{ (N+|n|)\alpha_j \} > 1 + \epsilon.
\]
Define
\[
u(z):= (N+|n|)\phi(z) - \sum_{j=1}^3 \{ (N+|n|)\alpha_j \} \log|z-z_j|,
\]
which is a subharmonic function on $G$. Moreover, there exist $\delta>0$ and $C>0$ such that
\[
u(z) + (1+\epsilon)\log^+|z| \le (N+|n|)\phi(z) + C, 
  \quad z \in G\setminus\bigcup_{j=1}^3\ds(z_j,\delta),
\]
and $\nu(u,z)\ge [\nu((N+|n|)\phi,z)]$ for $|z-z_j|<\delta$.
Therefore, in view of Proposition~\ref{prop:sko_one} there exists $f_n\in\oo(G)$ not identically
equal to zero such that
\[
\int_{D_\phi} |f_n(z)w^n|^2\, d\lambda^{2(1+N)}(z,w) = C_1 \int_G |f_n|^2e^{-2(N+|n|)\phi} \, d\lambda^2<+\infty.
\]
A sequence $f_n(z)w^n$, for suitable multi-indices $n$, is an infinite set of linearly independent
elements of $L^2_h(D_\phi)$.
\end{proof}

\begin{proposition}\label{prop:notharmonic}
Let a domain $G\subset \cc$ and a function $\phi\in\SH(G)$ be such that
there exists a disc $\ds(z_0,\delta) \subset\subset G$ with the following property
\begin{align*}
\Delta \phi &\not\equiv 0 \text{ on } \ds(z_0,\delta),\\
\nu(\phi,z)&=0, \ z \in \ds(z_0,\delta).
\end{align*}
Then $\dim L^2_h(D_\phi)=\infty$.
\end{proposition}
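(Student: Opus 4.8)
The plan is to exhibit, for all but finitely many multi-indices $n\in\zz_+^N$, a function $f_n\in\oo(G)$, $f_n\not\equiv0$, with $\int_G|f_n|^2e^{-2(N+|n|)\phi}\,d\lambda^2<\infty$. By the Fubini computation used in the proofs of Corollary~\ref{prop:bdd} and Proposition~\ref{prop:num} this means $f_n(z)w^n\in L^2_h(D_\phi)$, and for distinct $n$ these functions are linearly independent by Lemma~\ref{lem:hartogs}, so $\dim L^2_h(D_\phi)=\infty$. Each $f_n$ will be produced by Proposition~\ref{prop:sko_one} applied with $v:=(N+|n|)\phi$, and the whole point is to manufacture a suitable competitor $u$.

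First I would package the data of the disc. Since $\Delta\phi\not\equiv0$ on $\ds(z_0,\delta)$, the measure $\tfrac1{2\pi}\Delta\phi$ has positive mass there, so we may choose $r\in(0,\delta)$ with $m:=\tfrac1{2\pi}\Delta\phi(\ds(z_0,r))>0$; put $\mu:=\tfrac1{2\pi}\Delta\phi|_{\ds(z_0,r)}$. Because $\nu(\phi,z)=\tfrac1{2\pi}\Delta\phi(\{z\})=0$ on $\ds(z_0,\delta)$, the measure $\mu$ is atomless. Let $p(z):=\int_\cc\log|z-\zeta|\,d\mu(\zeta)$, which is subharmonic on $\cc$, harmonic off $\overline{\ds(z_0,r)}$, has Riesz measure $\mu$ (hence $\nu(p,\cdot)\equiv0$), and satisfies $p(z)=m\log|z|+O(1)$ as $|z|\to\infty$. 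Now fix a positive integer $\ell$ with $\ell m>1$, then $\epsilon>0$ with $\ell m>1+\epsilon$, and a compact $K\subset\subset G$ which is a small closed neighbourhood of $\overline{\ds(z_0,r)}$ (possible as $\overline{\ds(z_0,r)}\subset\subset G$). For $z\in G\setminus K$ the distance from $z$ to $\operatorname{supp}\mu$ is bounded below by a positive constant, so $p$ is bounded below there; combined with $p(z)\ge m\log|z|-O(1)$ for large $|z|$ and $\ell m>1+\epsilon$, this yields a constant $C$ with $(1+\epsilon)\log^+|z|\le C+\ell p(z)$ for every $z\in G\setminus K$.

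Then, for a fixed $n$ with $N+|n|\ge\ell$, I would set $v:=(N+|n|)\phi$ and take $u:=v-\ell p$, understood as a subharmonic function via the Riesz decomposition of $\phi$ near $\overline{\ds(z_0,r)}$: indeed $\tfrac1{2\pi}\Delta u=(N+|n|)\tfrac1{2\pi}\Delta\phi-\ell\mu\ge0$ since $\mu\le\tfrac1{2\pi}\Delta\phi$ and $\ell\le N+|n|$, and writing $\phi$ locally as its harmonic part plus the potentials of $\mu$ and of the remaining Riesz mass exhibits $u$ as a sum of subharmonic functions, so $u\in\SH(G)$, $u\not\equiv-\infty$. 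Since $\mu$ is atomless, $\nu(u,z)=(N+|n|)\nu(\phi,z)=\nu(v,z)$ for every $z$, so $(\ref{eq:sko_one:2})$ holds on $K$; and $(\ref{eq:sko_one:1})$ is exactly the estimate of the previous paragraph, because $v$ cancels in $u+(1+\epsilon)\log^+|z|-v=(1+\epsilon)\log^+|z|-\ell p$. Proposition~\ref{prop:sko_one} now supplies the required $f_n$, and letting $n$ run over $\{n:N+|n|\ge\ell\}$ completes the argument.

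The creative step, and the one place where the hypotheses are truly needed, is the choice of $u$: one cannot subtract from $(N+|n|)\phi$ any positive multiple of a Dirac mass, since $\phi$ has vanishing Lelong number on the disc, so one must instead subtract the potential $\ell p$ of the \emph{atomless} part of $\tfrac1{2\pi}\Delta\phi$ living there. This is precisely what keeps $(\ref{eq:sko_one:2})$ automatic, while $p(z)\sim m\log|z|$ together with $\ell m>1$ supplies the logarithmic decay at infinity that $(\ref{eq:sko_one:1})$ demands; the price is the constraint $\ell\le N+|n|$, which discards only finitely many $n$. One should also be slightly careful that $u$ is genuinely subharmonic rather than a mere formal difference of subharmonic functions, which is what the local Riesz decomposition of $\phi$ handles. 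The remaining points (the bound on $\log^+|z|-\ell p$ away from $K$, and the Fubini identity relating the integrals over $D_\phi$ and over $G$) are routine.
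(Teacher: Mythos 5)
Your argument is correct, and it reaches Proposition~\ref{prop:sko_one} by a genuinely different construction of the auxiliary function $u$ than the paper's. The paper decomposes $\phi=h+p$ on a disc around $z_0$ ($h$ harmonic, $p\le 0$ the potential part with boundary values $0$), replaces $p$ by a maximum with a scaled Green function so as to manufacture an \emph{artificial atom} at $z_0$, glues the modified function back onto $\phi$ (which forces a separate check of upper semicontinuity across the boundary circle via the harmonic modification $\psi$), and only then subtracts a logarithmic term to gain the decay at infinity required by (\ref{eq:sko_one:1}); condition (\ref{eq:sko_one:2}) is satisfied there essentially because $[\nu(v,\cdot)]=0$ on the disc. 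You instead subtract $\ell$ times the logarithmic potential $p$ of the \emph{atomless} measure $\mu=\tfrac1{2\pi}\Delta\phi|_{\ds(z_0,r)}$ itself: since $\mu$ has no atoms, $\nu(u,z)=\nu(v,z)$ at every point, so (\ref{eq:sko_one:2}) holds with equality everywhere and no localization of mass is needed, while $p(z)=m\log|z|+O(1)$ together with $\ell m>1+\epsilon$ and the lower bound $p\ge m\log d$ off a neighbourhood of $\operatorname{supp}\mu$ gives (\ref{eq:sko_one:1}). This buys you a shorter proof with no gluing or semicontinuity argument, at the mild cost of the constraint $N+|n|\ge\ell$ (which, as you note, discards only finitely many $n$) and of the point you correctly flag: $u=v-\ell p$ must be defined as a subharmonic function through the local Riesz decomposition (harmonic part plus potential of $(N+|n|)\tfrac1{2\pi}\Delta\phi-\ell\mu\ge 0$) rather than as a pointwise difference, since $p=-\infty$ on a possibly nonempty polar set. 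All the remaining steps (the choice of $r$ with $\mu(\ds(z_0,r))>0$, the Fubini reduction to $\int_G|f_n|^2e^{-2(N+|n|)\phi}\,d\lambda^2<\infty$, and the linear independence of the $f_n(z)w^n$) are handled exactly as in the paper.
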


\begin{proof}
We may assume that $\phi(z_0)>-\infty$ and $\phi$ is bounded on $\partial\ds(z_0,\widetilde\delta)$ 
for some $0<\widetilde\delta \le \delta$. Indeed,  since
the set $\{\phi < \phi(z_0) -1 \}$ is thin at $z_0$, it cannot intersect all the circles
centered at $z_0$
(see the proof of Thm.~3.8.3 in \cite{bib:ran}).
To simplify notation we also assume that $\widetilde\delta=1$ and $z_0=0$.

There exist (cf.~Thm.~4.5.4, \cite{bib:ran}) a harmonic function $h$
and a subharmonic function $p$ on $\ds$ such that
$\phi=h+p$ on $\ds$, $h=\phi$ a.e.\ on $\partial \ds$ and $\limsup_{z\to\partial\ds} p(z) = 0$.
The assumption of the proposition provides that $p(0)<0$.
Thus, there exist numbers $\delta^\prime \in(0,1)$ and $A>0$ such that $p\le -3A$
on $\ds(0,\delta^\prime)$.
Define
\[
\widetilde p(z):=
\begin{cases}
\frac {A}{-\log\delta^\prime} \log|z| - A, &\text{if } |z|\le \delta^\prime,\\
\max\big( \frac {A}{-\log\delta^\prime} \log|z| - A, \widehat p(z)\big), &\text{if }  \delta^\prime<|z|<1,
\end{cases}
\]
where $\widehat p(z):= M(p,0,z)$. Certainly, $\widetilde p$ is subharmonic on $\ds$
and equals $0$ on $\partial\ds$.
Let
\[
\widetilde \phi:=
\begin{cases}
\phi &\text{on } G\setminus\ds,\\
\widetilde p + h &\text{on } \ds.
\end{cases}
\]

We claim that $\widetilde \phi$ is subharmonic on $G$. In fact, it suffices
to show that $\widetilde \phi$ is upper semicontinuous on $\partial \ds$.
Note that the following harmonic modification of $\phi$:
\[
\psi=
\begin{cases}
\phi &\text{on } G\setminus\ds,\\
h &\text{on } \ds
\end{cases}
\]
is subharmonic on $G$ (if 
$(\phi_j)_{j\ge 1}$ is a decreasing sequence of continuous subharmonic functions 
tending to $\phi$ on $G$ then $\psi$ is the limit of their harmonic modification).
The inequality $\widetilde \phi \le \psi$ yields the upper semicontinuity of $\widetilde \phi$.

One can see that 
$\nu(\widetilde\phi,z)=0$ for $z\in\ds\setminus\{0\}$
and $\nu(\widetilde\phi,0)>0$.

Let $n\in\zz_+^N$ be such that $(N+|n|)\nu(\widetilde\phi,0)>1$.
Then we can apply Proposition~\ref{prop:sko_one} to $v=(N+|n|)\phi$ and 
$u(z)=(N+|n|)\widetilde\phi(z) - (N+|n|)\log|z|$. 
(Observe that $u\in\SH(\cc)$ because of Proposition~\ref{prop:sh2}.)
Therefore, similarly as in Proposition~\ref{prop:num} we obtain an infinite sequence of 
linearly independent members of $L^2_h(D_\phi)$.
\end{proof}

\begin{lemma}{\label{lem:number}}
Let $\alpha_1, \alpha_2, \alpha_3$ be real numbers such that $0\le \alpha_3 \le \alpha_2 \le \alpha_1 <1$.
If any of the following conditions is satisfied
\begin{enumerate}
\item\label{en:num1} $\alpha_2 = \alpha_3 =0$,
\item\label{en:num2} $\alpha_1 + \alpha_2 = 1$, $\alpha_3 =0$,
\end{enumerate}
then 
\[
\sum_{j=1}^3 \{ k\alpha_j \} \le 1 \text{ for all } k\in\nn.
\]
Otherwise
\begin{equation}
\sum_{j=1}^3 \{k\alpha_j\} >1 \text{ for infinitely many } k\in\nn. 
%\label{en:lem2}
\end{equation}
\end{lemma}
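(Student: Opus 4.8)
The plan is to separate the routine ``only if'' directions from the substantive ``otherwise'' claim. First I would dispose of the two listed special cases. If $\alpha_2=\alpha_3=0$, then $\sum_{j}\{k\alpha_j\}=\{k\alpha_1\}<1$ trivially. If $\alpha_1+\alpha_2=1$ and $\alpha_3=0$, then for $k\in\nn$ either $k\alpha_1\in\zz$ (whence also $k\alpha_2\in\zz$ and the sum is $0$) or $\{k\alpha_1\}+\{k\alpha_2\}=\{k\alpha_1\}+\{-k\alpha_1\}=1$, so the sum equals exactly $1$. That settles the easy implication.

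For the main claim, I would split according to whether the $\alpha_j$ are rational. The genuinely delicate situation is when the ratios are irrational; that is where equidistribution enters. The cleanest route is Weyl's equidistribution theorem applied to the vector $k\mapsto(k\alpha_1,k\alpha_2,k\alpha_3)$ modulo $\zz^3$: the closure of the orbit is a closed subgroup (coset of a subtorus) $H\subseteq(\rr/\zz)^3$, and $k\mapsto(k\alpha_1,k\alpha_2,k\alpha_3)$ is equidistributed in $H$. So $\sum_j\{k\alpha_j\}>1$ for infinitely many $k$ precisely when the open set $U=\{(x_1,x_2,x_3)\in(\rr/\zz)^3:\ x_1+x_2+x_3>1\}$ (interpreting $x_i$ as representatives in $[0,1)$) meets $H$ in a set of positive Haar measure, equivalently $U\cap H\neq\emptyset$ since $U$ is open and $H$ is a coset of a subgroup. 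Thus the task reduces to: \emph{unless we are in case (a) or (b), the subgroup $H$ contains a point with $x_1+x_2+x_3>1$.} The subgroup $H$ is determined by the integer linear relations among $\alpha_1,\alpha_2,\alpha_3,1$; since there are at most three independent such relations and we have constrained $0\le\alpha_3\le\alpha_2\le\alpha_1<1$, one can enumerate the possible $H$. Here the key structural observation — and the main obstacle — is to show that the only $H$ for which $U\cap H=\emptyset$ are the ``degenerate'' subgroups forcing $\alpha_2=\alpha_3=0$ or $\alpha_1+\alpha_2=1,\ \alpha_3=0$; in every other case $H$ has dimension $\ge 1$ in a direction transverse to the plane $x_1+x_2+x_3=1$, or else $H$ is a finite set of points one of which has coordinate sum exceeding $1$.

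To make the finite (rational) and mixed cases uniform, rather than a full case analysis I would use the following trick. Suppose we are not in case (a) or (b). Then at least two of the $\alpha_j$ are positive — say $\alpha_1,\alpha_2>0$ — and we are not in the excluded configuration. By Lemma~\ref{lem:number}'s hypotheses and a short argument one finds $k_0$ with $\{k_0\alpha_1\}+\{k_0\alpha_2\}+\{k_0\alpha_3\}>1$ (this is exactly where one must check finitely many small sub-cases: $\alpha_3=0$ with $\alpha_1+\alpha_2\neq 1$; $\alpha_3>0$; and the rational-ratio situations, using that $\{k\alpha\}$ runs over a full arithmetic progression of denominators). Once a single good $k_0$ exists, infinitely many follow: if the $\alpha_j$ generate an irrational direction, equidistribution gives infinitely many $k$ near $k_0\alpha$ in the torus, hence with sum still $>1$; if all $\alpha_j$ are rational with common denominator $q$, then $k_0+q\zz$ all work since $\{(k_0+mq)\alpha_j\}=\{k_0\alpha_j\}$. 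So the crux is the existence of one witness $k_0$ outside cases (a),(b), and the cleanest presentation is to argue: (i) if some $\alpha_j$ is irrational, pick $k$ with $k\alpha_j$ very close to $1$ from below while the other two terms are nonnegative — impossible only if the other two are forced to be $0$, i.e.\ case (a) up to relabeling, or if an exact integer relation pins things down, i.e.\ case (b); (ii) if all $\alpha_j$ are rational, write $\alpha_j=p_j/q$ in lowest common denominator and observe $\{k\alpha_j\}$ takes the value $(q-1)/q$ for suitable $k$ when $\gcd(p_j,q)=1$, and handle the divisibility bookkeeping by hand — this is the only genuinely computational part. I expect step (i)–(ii) reconciliation, i.e.\ proving that the \emph{only} obstructions are precisely (a) and (b) and nothing else slips through (for instance $\alpha_1=\alpha_2=\alpha_3=1/2$, which must be shown to \emph{not} be an obstruction: $\{k/2\}$ alternates $0,1/2$, and for odd $k$ the sum is $3/2>1$), to be the main point requiring care.
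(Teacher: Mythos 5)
Your treatment of the two exceptional cases and of the reduction ``one witness $k_0$ implies infinitely many'' is fine, and the overall skeleton (equidistribution on the closure of the orbit of $k\mapsto(k\alpha_1,k\alpha_2,k\alpha_3)$ in the torus, plus periodicity in the purely rational case) is a legitimate route; it is essentially a group-theoretic repackaging of the Kronecker-theorem case analysis the paper carries out. But the proposal has a genuine gap exactly at the point you yourself flag as ``the main point requiring care'': you never prove that the only subgroups $H$ (equivalently, the only integer relations among $1,\alpha_1,\alpha_2,\alpha_3$) obstructing $\sum_j\{k\alpha_j\}>1$ are those corresponding to cases (a) and (b). Worse, the shortcut you offer in step (i) is false as stated: ``an exact integer relation pins things down, i.e.\ case (b)'' does not hold. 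Relations of the form $p\alpha_1+q\alpha_2=r$ with $(p,q)\neq(1,1)$ occur well outside cases (a) and (b) (e.g.\ $\alpha_1=\sqrt2-1$, $\alpha_2=(3-\sqrt2)/2$, $\alpha_3=0$, where $\alpha_1+2\alpha_2=2$), and for such $\alpha$'s the conclusion $\sum_j\{k\alpha_j\}>1$ for infinitely many $k$ is true but not automatic: one cannot make $\{k\alpha_1\}$ close to $1$ while keeping control of $\{k\alpha_2\}$ independently, since the orbit closure is one-dimensional. This is precisely the paper's Case~5, which requires a genuine argument: writing $p\alpha_1+q\alpha_2=r$ and, according to the signs and relative sizes of $p,q$, choosing $k$ in a specific congruence class ($k=ql$, $k=pl+1$, or $k=pl$) with $l$ chosen by Kronecker's theorem so that $\{l\alpha_1\}$ or $\{l\alpha_2\}$ lies in a carefully calibrated small interval, from which the inequality $\{k\alpha_1\}+\{k\alpha_2\}>1$ is extracted by hand.

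Two smaller points. First, your reduction also needs the preliminary observation (made in the paper) that outside cases (a), (b) and the special configuration $\alpha_1=\alpha_2=\alpha_3=\tfrac12$, one can always select \emph{two} indices with $\alpha_{j_1},\alpha_{j_2}>0$ and $\{\alpha_{j_1}+\alpha_{j_2}\}>0$; without this selection the relation $\alpha_1+\alpha_2=1$ with $\alpha_3>0$ would derail the two-variable argument. Second, in the equidistribution framing you should justify that the set $U=\{\sum_j\{x_j\}>1\}$ is open in the torus (it is, by lower semicontinuity of $x\mapsto\{x\}$), since the fractional-part function is discontinuous at $0$; this is minor but should be said. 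The missing Case-5-type analysis, however, is not minor: it is the core of the ``otherwise'' direction, and as written your plan would not yield a complete proof.
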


\begin{proof}
Suppose that condition (\ref{en:num2}) holds (case (\ref{en:num1}) is trivial).
Then for any $k\in \nn$ we have
\[
\{k\alpha_1\} + \{k\alpha_2\} = \{k\alpha_1\} + \{k -k\alpha_1 \} =
\{k\alpha_1\} + \{ -k\alpha_1 \} \le 1.
\]

To prove the other part of the statement we examine several cases.
Note that if all $\alpha_j$'s are positive and not all of them equal $\frac 12$, 
then we may assume (in Cases~2--5) that 
\[
\alpha_1, \alpha_2, \{\alpha_1 + \alpha_2\} >0. 
\]
Moreover, we do not require $\alpha_1 \ge \alpha_2$.

\emph{Case 1.} $\alpha_1 = \alpha_2 = \alpha_3 = \frac 12$. For any odd $k$ we have
\[
 \{k\alpha_1\} + \{ k\alpha_2\} + \{k\alpha_3\} = \frac 32.
\]

\emph{Case 2.} $\alpha_1, \alpha_2 \in\qq$.

There are $p_1,p_2,q \in \nn$ such that $\alpha_j=\tfrac{p_j}{q}$, $j=1,2$.
If $p_1 + p_2  <q$, take $k:= lq -1$ for any $l\in\nn$ and notice that
\[
\{k\alpha_1\} + \{k\alpha_2\} = \left\{- \frac {p_1}q \right\} +  \left\{- \frac {p_2}q \right\}
= 1 - \frac {p_1}q + 1 - \frac {p_2}q >1.
\]
If $p_1 + p_2 > q$, the inequality holds for $k:=lq +1$, $l \in \nn$.

\emph{Case 3.} $\alpha_1\in \rr\setminus \qq$, $\alpha_2\in\qq$.

Let $p,q\in\qq$ be such that $\alpha_2 = \frac pq$. By the Kronecker theorem
(see e.g.~\cite{bib:apo}), the set
$\big\{\{(lq+1)\alpha_1\}:\, l\in\zz\big\}$ is dense in $(0,1)$, and therefore,
we can find infinitely many $k\in \nn$ of the form $k=lq+1$ 
such that $\{k\alpha_1\} >1 - \frac {p}{q}$.
Thus, for such numbers $k$ we have 
\[
\{k\alpha_1\} + \{k\alpha_2\} > 1 - \frac pq  + \left\{ (lq+1) \frac pq \right\} 
=1.
\]

\emph{Case 4.} $\alpha_1,\alpha_2 \in \rr\setminus \qq$ 
and $1,\alpha_1,\alpha_2$ are linearly independent over $\zz$ 
(i.e.\ if $s_1\alpha_1 + s_2\alpha_2 \in\zz$ for some $s_1,s_2\in\zz$, then $s_1=s_2=0$).

By the two-dimensional Kronecker theorem (cf.~\cite{bib:apo}) the set 
$\big\{(\{k\alpha_1\},\{k\alpha_2\}):\, k\in\zz \big\}$ is dense in $(0,1)^2$.
Therefore, there are infinitely many $k\in\nn$ satisfying $\{k\alpha_j\}> \frac 12$, $j=1,2$.
For such $k$ we have
$\{k\alpha_1\}+ \{k\alpha_2\} > 1$.

\emph{Case 5.} $\alpha_1, \alpha_2 \in\rr\setminus \qq$ and $1,\alpha_1,\alpha_2$ 
are linearly dependent over $\zz$.

Let $p,q, r\in\zz$ be such that $p\alpha_1+ q\alpha_2 = r$.

If $pq>0$, $p\neq q$,  we may assume that $0<p<q$ and 
take any $l\in \nn$ such that $\{l\alpha_1\} < \frac 1{pq}$
(there is infinitely many such $l\in\nn$ by the Kronecker theorem). Then
we have $\{ p l\alpha_1\} < \{q l\alpha_1\}$, and consequently, for $k=ql$,
\[
\{k \alpha_1\} + \{k \alpha_2\} = \{q l \alpha_1\} + \{ - p l\alpha_1\}
= \{q l \alpha_1\} + 1 - \{p l \alpha_1\} > 1.
\]

Suppose that $p=q>0$. 
We may find, again by the Kronecker theorem, infinitely many $l\in\nn$ such that
$\left\{ \frac rp \right\} < \{(pl+1)\alpha_1\} < \left\{ \frac rp \right\} + \frac 1{p}$.
Therefore, if $k = pl+1$,
\[
\{k\alpha_1\} + \{k\alpha_2\}  
= \{(pl+1)\alpha_1\} + \left\{ \frac rp - (pl+1)\alpha_1\right\}
> \left\{ \frac rp \right\} + \frac {p-1}{p} > 1.
\]

Finally, if
$pq< 0$, assume that $0<-q<p$. There are infinitely many $l\in \nn$ such that 
$\{l\alpha_2\} \in (\frac {1}{p+1}, \frac 1p)$.
For $k = pl$ we get
\[
 \{ k \alpha_1\} + \{k \alpha_2 \} = \{ -q l \alpha_2\} + \{pl \alpha_2 \}
 > \frac {-q}{p+1} + \frac p {p+1} \ge 1.
\]
This finishes the proof.
\end{proof}

There are some special cases when we can avoid using advanced $L^2$--techniques
to prove Theorem~\ref{thm:main}
and, moreover, describe the space $L^2_h(D_\phi)$.
This is, for instance, when $\phi$ is a subharmonic function of logarithmic growth on $\cc$.

\begin{proposition}\label{prop:fin}
Let $\phi\in\SH(\cc)$ be such that
$\limsup_{|z|\to\infty} \frac {\phi(z)}{\log |z|} = \gamma$
for some $\gamma\ge 0$ and suppose that $\Delta \phi$ has decomposition (\ref{eq:decomp}).
Let $n\in\zz_+^N$. Then the following conditions are equivalent:
\begin{enumerate}
\item\label{en:fin:1}
there exists $f_n\in\oo(\cc)$, $f_n\not\equiv 0$, such that $f_n(z)w^n \in L_h^2(D_\phi)$;
\item\label{en:fin:2}
$\sum_{j\ge 1} [ (N+|n|)\alpha_j] < (N+|n|)\gamma-1$.
\end{enumerate}
Moreover, if such a function $f_n$ exists, it is a polynomial of degree 
smaller than $(N+|n|)\gamma -1$.
\end{proposition}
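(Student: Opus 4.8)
The plan is to reduce condition~(\ref{en:fin:1}) to an explicit one‑variable $L^2$ condition, remove a polynomial that absorbs the integer parts of the Lelong numbers of $\phi$, and then read everything off from Corollary~\ref{cor:sh} and Proposition~\ref{prop:sh2}~(\ref{en:sh2:2}). Write $k:=N+|n|$. As in the proof of Corollary~\ref{prop:bdd}, integrating $|w^n|^2$ over the fibre $\{\|w\|<e^{-\phi(z)}\}$ shows that (\ref{en:fin:1}) holds if and only if there is $f_n\in\oo(\cc)$, $f_n\not\equiv0$, with $\int_\cc|f_n|^2e^{-2k\phi}\,d\lambda^2<\infty$. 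By Proposition~\ref{thm:siu} only finitely many indices $j$ satisfy $\alpha_j\ge1/k$, so $d:=\sum_{j\ge1}[k\alpha_j]$ is finite and $g(z):=\prod_j(z-a_j)^{[k\alpha_j]}$ is a polynomial of degree $d$ (only finitely many exponents being nonzero). Since $\nu(k\phi,a_j)=k\alpha_j\ge[k\alpha_j]$, the function $\psi:=k\phi-\log|g|$ is subharmonic on $\cc$, with $\nu(\psi,a_j)=k\alpha_j-[k\alpha_j]\in[0,1)$ and $\nu(\psi,z)=0$ whenever $z\ne a_j$ for all $j$ (the measure $\mu$ carries no atoms). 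From $\log|g(z)|=d\log|z|+o(1)$ near $\infty$ one gets that $\beta:=\limsup_{|z|\to\infty}\psi(z)/\log|z|=k\gamma-d$ is finite, and Proposition~\ref{prop:sh2}~(\ref{en:sh2:2}) then yields $C:=\limsup_{|z|\to\infty}\bigl(\psi(z)-\beta\log|z|\bigr)<\infty$.

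The reduction I would isolate is: \emph{condition~(\ref{en:fin:1}) holds if and only if there is $h\in\oo(\cc)$, $h\not\equiv0$, with $\int_\cc|h|^2e^{-2\psi}\,d\lambda^2<\infty$.} Indeed $|g|^2e^{-2k\phi}=e^{-2\psi}$ almost everywhere; if $\int_\cc|f_n|^2e^{-2k\phi}<\infty$ then applying Corollary~\ref{cor:sh}~(a) to $k\phi$ near each $a_j$ forces $\order(f_n,a_j)\ge[k\alpha_j]$, so $h:=f_n/g$ is entire and $\int_\cc|h|^2e^{-2\psi}=\int_\cc|f_n|^2e^{-2k\phi}$; conversely $f_n:=gh$ does the job. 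Moreover, since $\nu(\psi,\cdot)<1$ everywhere, Corollary~\ref{cor:sh} gives $e^{-2\psi}\in L^1_{\mathrm{loc}}(\cc)$, so for entire $h$ the piece $\int_{|z|\le R}|h|^2e^{-2\psi}$ is always finite and only the tail $\int_{|z|>R}|h|^2e^{-2\psi}$ is at stake.

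For (\ref{en:fin:1})$\Rightarrow$(\ref{en:fin:2}) and the final assertion, take such an $h$. From $\psi(z)\le\beta\log|z|+C+1$ for $|z|$ large we obtain $e^{-2\psi(z)}\ge c\,|z|^{-2\beta}$ there, hence $\int_{|z|>R}|h|^2|z|^{-2\beta}\,d\lambda^2<\infty$ for $R$ large. Expanding $h=\sum_{m\ge0}c_mz^m$ and integrating in polar coordinates, this integral equals $2\pi\sum_m|c_m|^2\int_R^\infty r^{2m+1-2\beta}\,dr$, which is finite only if $c_m=0$ for every $m\ge\beta-1$; thus $h$ is a polynomial with $\deg h<\beta-1$, so in particular $\beta>1$, i.e.\ $d<k\gamma-1$, which is (\ref{en:fin:2}), and $f_n=gh$ is a polynomial of degree $d+\deg h<d+(\beta-1)=k\gamma-1$. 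For (\ref{en:fin:2})$\Rightarrow$(\ref{en:fin:1}): if $d<k\gamma-1$ then $\beta>1$, so Proposition~\ref{prop:sh2}~(\ref{en:sh2:2}) makes $e^{-2\psi}$ integrable in a neighbourhood of $\infty$; together with $e^{-2\psi}\in L^1_{\mathrm{loc}}(\cc)$ this gives $\int_\cc e^{-2\psi}\,d\lambda^2<\infty$, so $h\equiv1$ (that is, $f_n:=g$) works.

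No step is genuinely hard; the care goes into the reduction — verifying that square‑integrability forces the vanishing orders of $f_n$ at the points $a_j$ to be exactly $[k\alpha_j]$, so that $f_n/g$ is entire, and that after subtracting $\log|g|$ the residual subharmonic function $\psi$ has growth rate \emph{precisely} $k\gamma-d$ with only fractional Lelong numbers remaining; this is what lets Corollary~\ref{cor:sh} and Proposition~\ref{prop:sh2}~(\ref{en:sh2:2}) apply without modification. One should also watch the borderline values $k\alpha_j\in\zz$ and $\beta=1$, where the strict versus non‑strict inequalities in Corollary~\ref{cor:sh} decide the outcome; with the equivalences arranged as above these cases need no separate argument.
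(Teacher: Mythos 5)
Your proposal is correct and follows essentially the same route as the paper: fibre integration reduces (\ref{en:fin:1}) to finiteness of $\int_\cc|f_n|^2e^{-2(N+|n|)\phi}\,d\lambda^2$, Corollary~\ref{cor:sh} forces zeros of order at least $[(N+|n|)\alpha_j]$ at the points $a_j$ and governs integrability at $\infty$, and orthogonality of the powers of $z$ gives the degree bound. The only cosmetic difference is that you divide out the monic polynomial $g$ and run the Parseval argument against the residual weight $\psi=(N+|n|)\phi-\log|g|$ (whose growth is exactly $(N+|n|)\gamma-\sum_j[(N+|n|)\alpha_j]$), whereas the paper bounds $\deg f_n$ directly via the complete Reinhardt subdomain $\widetilde D\subset D_\phi$ and then counts the zeros; both organizations rest on the same estimates and yield the same conclusion.
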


\begin{proof}
(\ref{en:fin:1})$\implies$(\ref{en:fin:2})

Due to Proposition~\ref{prop:sh2}, we have 
$\phi(z) \le \gamma \log^+|z| + C$ for $z\in\cc$ and some constant $C>0$.
Therefore, it is clear that
\[
\widetilde D := 
  \left\{(z,w)\in \cc^{1+N}:\, \|w\| < e^{-C}\min(1, |z|^{-\gamma}) \right\}
\subset D_\phi.
\]
The domain $\widetilde D$ is complete Reinhardt,
hence we have the expansion
$f_n(z)w^n=\sum_{k=0}^\infty a_kz^kw^n$.
What is more, the monomials $a_kz^kw^n$ are pairwise orthogonal members of $L_h^2(\widetilde D)$.
It follows  from Corollary~\ref{cor:sh} applied to $u= (N+|n|)\gamma\log|z|$ that  $a_k = 0$ if
$k\ge (N+|n|)\gamma -1$.

On the other hand, the finiteness of the integrals
\[
\int_{D_\phi} |f_n(z)|^2|w|^{2n} \, d\lambda^{2(1+N)}(z,w) 
= C_n \int_{\cc} |f_n(z)|^2 e^{-2(N+|n|)\phi(z)} d\lambda^2(z)
\]
implies, again by Corollary~\ref{cor:sh}, that $f_n$ must have zeroes at points $a_j$ 
of multiplicity at least $[(N+|n|)\alpha_j]$.
The sum of these multiplicities does not exceed the degree of the polynomial $f_n$.

(\ref{en:fin:2})$\implies$(\ref{en:fin:1})

Denote $k_j:=[(N+|n|)\alpha_j]$ and let $l\in\nn$ be such that
\[
\sum_{j=1}^\infty [(N+|n|)\alpha_j] = \sum_{j=1}^l k_j < (N+|n|)\gamma -1.
\]
Put
\[
f_n(z):= \prod_{j=1}^l (z-a_j)^{k_j}, \quad z\in\cc,
\]
and use Corollary~\ref{cor:sh} to verify that $f_n(z)w^n\in L_h^2(D_\phi)$.
\end{proof}

\begin{remark}
If $\phi$ is as in Proposition~\ref{prop:fin} then we have the well known fact:
$\sum_{j\ge 1} \alpha_j \le \Delta \phi(\cc) \le \gamma$.
Moreover, if $\sum_{j\ge 1} \alpha_j = \gamma$ then inequality (\ref{en:fin:2})
is equivalent to $\sum_{j\ge 1} \{\alpha_j\}>1$.
Thus, Proposition~\ref{prop:fin} and Lemma~\ref{lem:number} give another proof 
of Theorem~\ref{thm:main} in this special case.
\end{remark}

\begin{remark}
 The results of this paper
 (Theorem~\ref{thm:main}, Proposition~\ref{prop:num}, Proposition~\ref{prop:notharmonic}, 
 and Corollary~\ref{cor:nonpolar}) do not yet give full answer to the problem
 of the dimension $L_h^2(D_\phi)$ of a Hartogs domain $D_\phi$ with one dimensional basis.
 The case where the basis $G$ has polar complement
and the function $\phi\in\SH(G)$ is harmonic remains unsolved.

The starting point of the proof in this setting may be the following. Suppose that
$L_h^2(D_\phi)$ is not trivial, then 
there exists $f\in\oo(G)$ not identically equal to $0$ and such that 
$\int_G |f|^2 e^{-2\phi} \, d\lambda^2 <+\infty$.
Define a function $h:= \phi - \log|z|$, which is harmonic on $G\setminus f^{-1}(0)$, and moreover,
$e^{-2h}$ is integrable.

If $\cc \setminus G$ is finite or if $h$ can be decomposed as a difference of two functions
subharmonic on $\cc$, the proof follows the same line as in this paper. However, in general
setting better understanding of singularities of harmonic functions is needed.
\end{remark}

\textbf{Acknowledgements.}
The author would like to thank to his professors, M.~Jarnicki and W.~Zwonek, for 
helpful discussions and remarks.

\end{document}